\documentclass{amsart}
\usepackage[english]{babel}
\usepackage[utf8]{inputenc}

\usepackage{amsmath,amssymb,amsthm}
\usepackage{enumitem}

\usepackage{fullpage}
\usepackage{enumitem}
\usepackage[all]{xy}
\usepackage{hyperref}

\usepackage{tikz}

\usepackage{xcolor}

\usepackage[all]{xy}

\newtheorem{thm}{Theorem}[section]    
\newtheorem{lem}[thm]{Lemma}          
\newtheorem{prop}[thm]{Proposition}
\newtheorem{cor}[thm]{Corollary}
\newtheorem*{open}{Question}

%
\theoremstyle{definition}
\newtheorem{defi}[thm]{Definition}

\newtheorem{rem}[thm]{Remark}             
\newtheorem*{ex}{Example}
%

\newcommand{\Db}{\mathbb{D}}

\newcommand{\Qb}{\mathbb{Q}}
\newcommand{\Wb}{\mathbb{W}}

\newcommand{\Cc}{\mathcal{C}}
\newcommand{\Ec}{\mathcal{E}}

\renewcommand{\Mc}{\mathcal{M}}
\newcommand{\Pc}{\mathcal{P}}

\newcommand{\Aut}{\text{Aut}}

\newcommand{\Id}{\text{Id}}

\newcommand{\Graphs}{{{\mathcal G}raphs}}
\newcommand{\InjGraphs}{{\mathcal{I}nj{\mathcal G}raphs}}
\newcommand{\Groups}{{{\mathcal G}roups}}
\newcommand{\Monoids}{{{\mathcal M}onoids}}
\newcommand{\Part}{{{\mathcal P}art}}
\newcommand{\Posets}{{{\mathcal P}osets}}
\newcommand{\kAlg}{{{\mathcal A}lg_k}}
\newcommand{\kEvAlg}{{{\mathcal{E}v{\mathcal A}lg_k}}}
\newcommand{\kRegEvAlg}{{{\mathcal{R}eg\mathcal{E}v{\mathcal A}lg_k}}}


\begin{document}

\title{A remark on the number of automorphisms of some algebraic structures}

\author{R\'emi Molinier}
\address{Univ. Grenoble Alpes, CNRS, IF, 38000 Grenoble, France}
\email{remi.molinier@univ-grenoble-alpes.fr\\
ORCID: \href{https://orcid.org/0000-0002-3742-5307}{0000-0002-3742-5307}}

\subjclass{Primary: 08A35 ; Secondary: 05C25, 20F29}
\keywords{automorphisms, graphs, monoids, partial groups, posets}

\begin{abstract}
In these notes we look at the following question: given a category $\mathcal C$ of algebraic structure (e.g. the category of groups, monoids, partial groups, ...)  and a rational $r\in \mathbb Q$, does there exists an element $x\in \mathcal C$ such that the size of its automorphism group $\text{Aut}_{\mathcal C} (x)$ divided by the size of $x$ (whatever that would means) is equal to $r$ ?
To our knowledge, this question was introduced by Tărnăuceanu in the category of groups. Here, we answer positively to this question in the categories of evolution algebras, graphs, monoids, partial groups and posets. 
\end{abstract}

\maketitle

\section{Introduction}

In a recent paper \cite{Ta}, Tărnăuceanu proved the following nice density result. 
\begin{thm}[{\cite{Ta}, Main Theorem}]
	The set 
\[\left\{\frac{|\Aut_{\Groups}(G)|}{|G|}\;\middle|\; G \text{ a finite group}\right\}\]
is dense in $\Qb\cap]0,+\infty[$. 
\end{thm}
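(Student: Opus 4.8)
The plan is to exploit the \emph{multiplicativity} of the quantity $|\Aut_{\Groups}(G)|/|G|$ under direct products of groups of coprime order, and then, for any prescribed target $r\in\Qb\cap\,]0,+\infty[$ and any tolerance $\varepsilon>0$, to engineer a group whose ratio lands within $\varepsilon$ of $r$. First I would record the elementary but crucial fact that if $\gcd(|G|,|H|)=1$ then the factor $G\times\{1\}=\{x\in G\times H : x^{|G|}=1\}$ is characteristic (and likewise for $H$), so every automorphism of $G\times H$ preserves both factors and $\Aut_{\Groups}(G\times H)\cong\Aut_{\Groups}(G)\times\Aut_{\Groups}(H)$, whence
\[
\frac{|\Aut_{\Groups}(G\times H)|}{|G\times H|}=\frac{|\Aut_{\Groups}(G)|}{|G|}\cdot\frac{|\Aut_{\Groups}(H)|}{|H|}.
\]
Thus the set $S$ of achievable ratios is stable under multiplying ratios coming from groups of coprime orders, and it suffices to produce building blocks whose products approximate an arbitrary positive rational.

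Next I would isolate two families. On the one hand, the cyclic group $C_n$ has $\Aut_{\Groups}(C_n)\cong(\mathbb{Z}/n\mathbb{Z})^{\times}$, giving ratio $\varphi(n)/n=\prod_{p\mid n}(1-1/p)\in\,]0,1]$. On the other hand, the dihedral group $D_p$ of order $2p$ (for $p$ an odd prime) satisfies $|\Aut_{\Groups}(D_p)|=p(p-1)$, hence has ratio $(p-1)/2$, which tends to $+\infty$ along the primes. The heart of the matter is the classical density statement: for any fixed integer $M$, the set $\{\varphi(n)/n : \gcd(n,M)=1\}$ is dense in $[0,1]$. I expect this to be the main step, and I would prove it by a greedy argument: since $\sum_{p\nmid M}1/p$ diverges, the product $\prod_{p\nmid M}(1-1/p)$ equals $0$, so, starting from the empty product $1$ and successively multiplying by factors $(1-1/p)$ with $p\nmid M$ and $p$ larger than $1/\varepsilon$, each step decreases the running value by less than $\varepsilon$ while the value descends all the way towards $0$; the first time it drops at or below a prescribed $t\in\,]0,1[$ it is within $\varepsilon$ of $t$.

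Finally I would combine the two families. Given $r$ and $\varepsilon$, choose an odd prime $p$ with $(p-1)/2>r$ and set $\rho=(p-1)/2$. Applying the density statement with $M=2p$, pick $n$ coprime to $2p$ with $|\varphi(n)/n-r/\rho|<\varepsilon/\rho$; then the group $D_p\times C_n$ has coprime factors and ratio $\rho\cdot\varphi(n)/n$ within $\varepsilon$ of $r$. Since $\rho$ can be taken as large as we please, this covers every target in $]0,+\infty[$, which yields the density of $S$ in $\Qb\cap\,]0,+\infty[$ (density being a metric notion, approximating the real number $r$ by rationals in $S$ is exactly what is required). The only genuinely delicate ingredient is the number-theoretic density of $\{\varphi(n)/n\}$ together with the observation that restricting $n$ to a fixed coprimality class does not destroy it; the group-theoretic inputs, namely multiplicativity and the two automorphism computations, are routine.
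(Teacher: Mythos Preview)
The paper does not prove this theorem; it is quoted from T\u{a}rn\u{a}uceanu \cite{Ta} without argument, so there is no in-paper proof to compare line by line. Your proposal is mathematically sound: the multiplicativity of $|\Aut(G)|/|G|$ over coprime direct factors, the computation $|\Aut(D_p)|/|D_p|=(p-1)/2$, and the greedy argument for the density of $\{\varphi(n)/n:\gcd(n,M)=1\}$ in $[0,1]$ are all correct and combine as you describe.

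The one substantive point of comparison is that the paper notes T\u{a}rn\u{a}uceanu in fact establishes the stronger statement that the ratios coming from finite \emph{abelian} groups are already dense. Your construction uses $D_p\times C_n$, which is non-abelian, so your argument recovers the stated theorem but not this refinement. If you wanted to match that, you could replace the dihedral factor by an abelian source of large ratios, for instance $(\mathbb{Z}/p\mathbb{Z})^2$, whose ratio $|\mathrm{GL}_2(\mathbb{F}_p)|/p^2=(p-1)^2(p+1)/p$ also tends to infinity, and then run the same density-plus-multiplicativity scheme entirely inside the abelian world. Apart from this, your strategy is essentially the natural one and almost certainly close in spirit to the cited proof.
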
 

His result is actually a bit stronger since he proved that the subset considering only finite abelian groups is dense. 
From this Theorem, he addresses the following question. 

\begin{open}
Does there exists, for every $r\in \Qb\cap ]0;+\infty[$, a group $G$ such that $\dfrac{|\Aut_{\Groups}(G)|}{|G|}=r$ ?
\end{open}

We are not trying to answer this question here but we look at the question in two more general contexts: the category $\Monoids$ of monoids and the category $\Part$ of partial groups. Theses two algebraic structures give a generalisation of the structure of group, one removing the axiom about the existence of inverses and the other by relaxing the domain of the product (i.e. they can be tuples of elements for which the product is not define). In these two categories, it is much more easier to find an object with a given number of automorphisms. More precisely, if $\Cc$ is the category $\Monoids$ or $\Part$,  for every group $G$, one can find infinitely many objects of $\Cc$, with $G$ as automorphism group. This realisability result for partial group is due to the author in collaboration with Diaz and Viruel \cite{DMV}. The corresponding one on monoids is not new and can be deduced from the well known construction highlighted in section \ref{sec:graph to mon} bellow, and Theorems from Frucht \cite{Fr} and Sabidussi \cite{Sa} on the realisability for graphs which state that every group is isomorphic to the automorphism group of a well chosen graph. This is actually the same tactic that is use in \cite{DMV}: we first prove that for every graph $\Gamma$ we can find a partial group $\Mc$ with the same automorphism group as $\Gamma$ and we conclude using the results of Frucht and Sabidussi.

Here, after giving functors from the category of graphs to $\Monoids$ and $Part$  which induce isomorphisms on automorphism groups, we consider well chosen graphs to obtain the followings.

\begin{thm}[cf Theorem \ref{thm:main monoids}]\label{thm:intro mon}
Let $r\in\Qb\cap]0,+\infty[$. 

There exists a monoid $M$ such that $r=\dfrac{|\Aut_\Monoids(M)|}{|M|}$.
Moreover, the monoid $M$ can be chosen to be commutative. 
\end{thm}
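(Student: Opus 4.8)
The plan is to realize an arbitrary positive rational $r = a/b$ (in lowest terms) as a ratio $|\Aut_\Monoids(M)|/|M|$ by choosing a suitable graph $\Gamma$ and pushing it through the functor $\Graphs \to \Monoids$ promised in the excerpt, which induces an isomorphism on automorphism groups. The guiding principle is that two independent quantities must be controlled simultaneously: the size of the automorphism group (which, by the realisability results of Frucht and Sabidussi together with the functor, reduces to choosing a graph with a prescribed automorphism group) and the total number of elements of the resulting monoid. Since the functor adds a controlled number of extra elements to the vertex set of the graph, I expect $|M|$ to be something like $|V(\Gamma)| + c$ for a constant $c$ depending only on the construction, so the ratio becomes $|\Aut(\Gamma)|/(|V(\Gamma)|+c)$.

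First I would fix the target $r = a/b$ and look for a graph whose automorphism group has order exactly $a$ while its vertex set (after applying the functor) has size exactly $b$. The natural building blocks are disjoint unions: taking $n$ disjoint copies of a rigid ``gadget'' graph $\Gamma_0$ (one with trivial automorphism group) produces an automorphism group isomorphic to the symmetric group $S_n$, of order $n!$, while the vertex count scales linearly. More flexibly, I would combine several rigid gadgets of \emph{distinct} isomorphism types, each repeated some number of times: if there are $n_i$ copies of the $i$-th type, the automorphism group is $\prod_i S_{n_i}$ of order $\prod_i (n_i!)$, and the vertex count is $\sum_i n_i |V(\Gamma_0^{(i)})|$ plus padding. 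This gives enough freedom to hit any desired $(|\Aut|, |V|)$ pair, because the factorials $\prod_i n_i!$ realize a rich set of integers and the vertex count can be tuned almost independently by inflating the size of the rigid gadgets or adding extra rigid components (which add to $|V|$ without changing $|\Aut|$).

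The concrete strategy to force the numerator: choose rigid asymmetric gadgets $G_1, \dots, G_k$ of pairwise distinct isomorphism types, and take $m_i$ copies of $G_i$; then $|\Aut_\Graphs(\bigsqcup_i m_i G_i)| = \prod_i m_i!$. To realize a given numerator $a$ exactly I would first write $a$ as a product of factorials (every positive integer divides some factorial, and with a free rigid gadget one can multiply in any factor), then realize the target value and correct the leftover factor by a further rigid component contributing to the denominator only. The commutativity claim follows because the functor should be arranged so that the monoid built from a disjoint union of graphs is a commutative monoid (for instance a monoid whose nontrivial products all collapse to an absorbing element), so commutativity is automatic from the construction rather than requiring a separate argument.

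\textbf{The main obstacle.} The delicate part is hitting \emph{both} coordinates exactly at once: once the numerator $a$ is pinned down by a choice of multiplicities $m_i$, the vertex count $|V|$ is nearly determined, so I must have an independent knob to adjust $|V|$ up to the exact value needed for the denominator $b$ without disturbing $\Aut$. The clean fix is to introduce rigid padding: any number of additional pairwise non-isomorphic rigid components each contribute a trivial factor to the automorphism group but add vertices, and by choosing gadgets of various sizes I can make $|V| + c = b$ hold exactly. Verifying that such rigid gadgets of every needed size exist, that they remain non-isomorphic to one another and to the multiplicity blocks (so no unwanted symmetries appear), and that the constant $c$ contributed by the functor is correctly accounted for, is where the bookkeeping lives; but each step reduces to elementary graph combinatorics plus the already-granted isomorphism $\Aut_\Monoids(M) \cong \Aut_\Graphs(\Gamma)$.
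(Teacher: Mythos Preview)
Your high-level plan---build a graph with a prescribed automorphism group and push it through the functor to $\Monoids$---is correct, but the argument rests on a wrong guess about the size of the resulting monoid. You write that you ``expect $|M|$ to be something like $|V(\Gamma)|+c$ for a constant $c$,'' and your entire padding scheme is calibrated to that. In fact the construction gives $|M(\Gamma)|=|V|+|E|+2$ (Lemma~\ref{lem:properties of M(Gamma)}\ref{item:properties of M(Gamma)(v)}), so the edge count enters the denominator as well. This is not a cosmetic correction: once $|E|$ is present, adding a rigid connected component no longer moves $|M|$ by a freely chosen amount, and your ``independent knob'' for the denominator has to be rethought from scratch.

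There is a second soft spot at the numerator. You propose to realise an arbitrary $a$ as a product $\prod_i m_i!$; but not every positive integer is such a product (try $a=5$). Your parenthetical remark that ``every positive integer divides some factorial'' is exactly the right escape hatch---rescale $r=a/b$ so the numerator becomes a single factorial---yet you never actually take that step, and the disjoint-union machinery with several gadget types is then unnecessary.

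The paper sidesteps both issues with a much simpler device. It uses one connected graph $\Gamma_{p,q}$, a star with $p$ leaves and a pendant path of length $q$, for which $\Aut_\Graphs(\Gamma_{p,q})\cong\Sigma_p$ while $|V|+|E|+2$ is linear in $p+q$. Writing $r=m/n$, one takes $p=2m$ so that $|\Aut|=(2m)!$, and then chooses the tail length $q$ to make $|M(\Gamma_{p,q})|$ equal the required multiple of $(2m-1)!$. Thus a single parameter $q$ serves as the padding knob, no rigidity or non-isomorphism bookkeeping is needed, and commutativity comes for free from Lemma~\ref{lem:properties of M(Gamma)}\ref{item:properties of M(Gamma)(i)}.
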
  

\begin{thm}[cf Theorem \ref{thm:main part}]\label{thm:intro part}
Let $r\in\Qb\cap]0,+\infty[$. 

There exists a partial group $\Mc$ such that $r=\dfrac{|\Aut_\Part(\Mc)|}{|\Mc|}$.
\end{thm}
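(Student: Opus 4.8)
The plan is to reduce the statement for partial groups to the known density/realisability results for graphs, exactly mirroring the strategy announced in the introduction for monoids. The paper promises a functor from $\Graphs$ to $\Part$ inducing isomorphisms on automorphism groups; granting that, for any finite graph $\Gamma$ we obtain a partial group $\Mc = \Mc(\Gamma)$ with $\Aut_\Part(\Mc) \cong \Aut_{\Graphs}(\Gamma)$, so that $|\Aut_\Part(\Mc)| = |\Aut_{\Graphs}(\Gamma)|$. The whole problem then shifts to controlling the \emph{size} $|\Mc|$ of the resulting partial group relative to $|\Gamma|$ (its number of vertices), since the automorphism count is already pinned down by $\Gamma$.

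First I would fix the target ratio $r = a/b$ with $a,b$ positive integers, and ask: which ratios $\dfrac{|\Aut_{\Graphs}(\Gamma)|}{f(\Gamma)}$ can I realise, where $f(\Gamma)$ is whatever $|\Mc(\Gamma)|$ works out to be as a function of $\Gamma$? The key flexibility is that the construction in section \ref{sec:graph to mon} typically inflates a graph on $n$ vertices into a structure whose underlying set has a \emph{predictable} cardinality (for instance $n$ plus a controllable number of auxiliary elements, or a fixed affine function of the vertex/edge count). So the concrete steps are: (i) record the exact formula for $|\Mc(\Gamma)|$ in terms of $\Gamma$; (ii) choose a family of graphs whose automorphism groups have a known, tunable order --- disjoint unions of copies of a rigid graph give symmetric-group automorphisms $S_k$ of order $k!$, while adding isolated vertices or pendant gadgets lets me pad the underlying size without changing $\Aut$; (iii) solve the resulting elementary Diophantine/arithmetic problem to hit $r$ exactly, possibly after first achieving $r$ up to a unit by a base graph and then padding with automorphism-free filler to scale $|\Mc|$.

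The main obstacle I anticipate is precisely this arithmetic matching step: unlike the pure density statement, here we need an \emph{exact} equality $|\Aut_\Part(\Mc)|/|\Mc| = r$, not merely approximation, so I must ensure the realisable pairs $(|\Aut|, |\Mc|)$ are rich enough to produce every positive rational. The cleanest route is to find, for each denominator-freedom I need, a graph whose automorphism group has order exactly $a$ while the partial-group size is exactly $b$ (or a common multiple scaled consistently). Building such graphs to order is classical --- any finite group, and in particular a group of a prescribed order, is $\Aut_{\Graphs}(\Gamma)$ for some $\Gamma$ by Frucht--Sabidussi --- but Frucht's construction gives little control over $|\Gamma|$, hence over $|\Mc|$. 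So the delicate point is decoupling the two parameters: realising a prescribed automorphism order while independently tuning the cardinality.

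I would resolve this by using the abelian/cyclic case as a toolkit, paralleling Tărnăuceanu's use of abelian groups: cyclic automorphism groups $\mathbb{Z}/k$ are realised by small explicit graphs whose size I can compute, and taking disjoint unions with rigid (asymmetric) graphs multiplies automorphism orders while adding known numbers of vertices. Combining a handful of such building blocks, I expect to write any $r = a/b$ by (a) realising the numerator $a$ as a product of automorphism orders of chosen components, and (b) absorbing the discrepancy between the natural size and the desired denominator $b$ by appending rigid filler of the right cardinality. Once the exact equality is set up at the level of graphs, the promised automorphism-preserving functor transports it verbatim to $\Part$, completing the proof; the verification that the chosen graph indeed has the claimed automorphism group and vertex count is the only remaining routine computation.
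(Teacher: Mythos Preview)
Your high-level strategy matches the paper's exactly: pass through the functor $\Ec\colon\Graphs\to\Part$ with $\Aut_\Part(\Ec(\Gamma))\cong\Aut_\Graphs(\Gamma)$, record the size formula for $|\Ec(\Gamma)|$, and then engineer a graph hitting the ratio. Where you diverge is in the graph-building step, and your version is both heavier and slightly unsafe. The paper's size formula is $|\Ec(\Gamma)|=1+|V|+2|E|$, and rather than assembling disjoint unions of rigid blocks, Frucht realisations, or cyclic gadgets, it uses a single explicit two-parameter family $\Gamma_{p,q}^+$: a star with $p\ge2$ leaves, a tail of length $q\ge2$ at the centre, plus one isolated vertex. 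This has $\Aut\cong\Sigma_p$, $|V|=p+q+2$, $|E|=p+q$, hence $|\Ec(\Gamma_{p,q}^+)|=3(p+q+1)$. Writing $r=m/n$ with $m,n\ge2$ and taking $p=3m$, $q=n(3m-1)!-3m-1$ gives $|\Aut|/|\Ec|=(3m)!/(3n(3m-1)!)=m/n$. The two free parameters decouple numerator from denominator directly---no Diophantine subtlety, no Frucht.

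One genuine slip in your padding scheme: adding \emph{several} isolated vertices \emph{does} change $\Aut$, since they permute among themselves; that filler works exactly once (and indeed the paper adds precisely one isolated vertex, to force divisibility by $3$). Your rigid-filler idea is salvageable provided each added component is asymmetric and pairwise non-isomorphic to every other component present, but at that point you are working much harder than the paper's two-line arithmetic.
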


Note that the construction used here for partial groups is different from the one used in \cite{DMV}. Indeed, in \cite{DMV}, the partial groups considered are infinite even when the graph is finite (see for instance \cite[Example 6.4]{DMV}). Hence the construction given in Section \ref{subsec:graph to part} leads to another proof of the universality of the category of partial groups.

We also use the same tactic on posets to get the same kind of result in that context.

\begin{thm}[cf Theorem \ref{thm:main posets}]\label{thm:intro poset}
Let $r\in\Qb\cap]0,+\infty[$. 

There exists a poset $P$ such that $r=\dfrac{|\Aut_\Posets(P)|}{|P|}$.
\end{thm}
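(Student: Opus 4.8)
The plan is to follow the same tactic used for monoids and partial groups: first construct a functor from the category of graphs to the category of posets that induces isomorphisms on automorphism groups, and then feed in a well-chosen family of graphs whose automorphism-to-vertex ratios are dense enough (indeed, controllable) to hit any prescribed $r\in\Qb\cap]0,+\infty[$.

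First I would set up the graph-to-poset construction. The standard way to turn a graph $\Gamma=(V,E)$ into a poset while preserving automorphisms is to take a three-level (or two-level) poset: put the vertices $V$ at one rank and the edges $E$ at an adjacent rank, with $v<e$ whenever $v$ is an endpoint of $e$ (the incidence poset). One must check that for graphs with no isolated vertices and no repeated structure that would create spurious symmetries, every poset automorphism is forced to preserve the two ranks and hence restricts to a graph automorphism, giving $\Aut_{\Posets}(P(\Gamma))\cong\Aut_{\Graphs}(\Gamma)$. A cleaner variant, which I expect to be the safe choice, is to subdivide or to attach rigid "gadgets" (e.g. distinguishing chains of different lengths hung below vertices versus edges) so that no automorphism can swap the vertex-layer with the edge-layer and no accidental automorphisms appear. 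The key lemma to establish is precisely this isomorphism of automorphism groups, the poset analogue of Section~\ref{sec:graph to mon} for monoids.

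Next, I would compute the size of $P(\Gamma)$ in terms of $\Gamma$. If the poset has underlying set of cardinality $|V|+|E|$ (plus whatever fixed gadget elements are attached), then
\[
\frac{|\Aut_{\Posets}(P(\Gamma))|}{|P(\Gamma)|}=\frac{|\Aut_{\Graphs}(\Gamma)|}{|V|+|E|+c},
\]
where $c$ is a constant number of gadget vertices. The task then reduces to a purely combinatorial realizability statement: given $r=\tfrac{p}{q}$ in lowest terms, find a graph $\Gamma$ with $|\Aut_{\Graphs}(\Gamma)|=p\cdot k$ and $|V|+|E|+c=q\cdot k$ for some common scaling $k$. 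Here I would reuse the flexible graph constructions already invoked via Frucht and Sabidussi: take a rigid graph (trivial automorphism group) of a prescribed size to control the denominator, together with disjoint copies or symmetric decorations to inflate the automorphism group by a controlled factor, adjusting vertex and edge counts to match $p$ and $q$ exactly.

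The main obstacle I anticipate is the coupled bookkeeping between $|\Aut|$ and the total size: unlike the monoid and partial-group cases where the construction may add elements freely without disturbing symmetry, in a poset every added element risks either creating new automorphisms or changing the size, so the numerator and denominator cannot be tuned independently. The delicate step is therefore exhibiting, for each target ratio $p/q$, a single graph realizing both the exact automorphism count and the exact incidence-size simultaneously. I would handle this by using disjoint unions $\Gamma=\Gamma_{\mathrm{sym}}\sqcup\Gamma_{\mathrm{rigid}}$, where $\Gamma_{\mathrm{sym}}$ is chosen (e.g. a disjoint union of identical rigid blocks, so its automorphism group is a symmetric group $S_n$ permuting the blocks) to supply a factorial-rich automorphism group, and $\Gamma_{\mathrm{rigid}}$ is an asymmetric padding graph used solely to tune $|V|+|E|$; since disjoint unions multiply automorphism groups when the components are non-isomorphic, this decouples the two quantities enough to force the ratio to equal any prescribed positive rational, completing the proof exactly as in Theorems~\ref{thm:main monoids} and~\ref{thm:main part}.
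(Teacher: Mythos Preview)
Your overall architecture matches the paper's exactly: build the incidence (face) poset $P(\Gamma)=\{\{x\}:x\in V\}\cup E$ ordered by inclusion, show $\Aut_{\Posets}(P(\Gamma))\cong\Aut_{\Graphs}(\Gamma)$, and then pick graphs with a controllable ratio $|\Aut|/(|V|+|E|)$. Two points of comparison are worth making.

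First, the gadget layer you propose is unnecessary. In $P(\Gamma)$ the singletons $\{x\}$ are exactly the minimal elements, and any poset automorphism preserves minimality; hence the vertex layer is automatically stabilised and no rigidifying chains are needed. The paper carries this out directly (Lemma~\ref{lem:properties of P(Gamma)} and Lemma~\ref{lem:poset to graph}), so your ``safe choice'' would only complicate the size bookkeeping.

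Second, the paper does not use Frucht--Sabidussi-style realisations or disjoint unions with asymmetric padding. It works instead with a single explicit one-parameter family $\Gamma_{p,q}^{+}$ (a star with $p$ spikes, a tail of length $q$, and one isolated vertex), for which $\Aut\cong\Sigma_{p}$ and $|V|+|E|=2(p+q+1)$; writing $r=m/n$ and setting $p=2m$, $q=n(2m-1)!-2m-1$ gives the ratio $r$ on the nose. Your disjoint-union scheme would also work, but it requires extra care (ensuring the rigid padding is connected, not isomorphic to the repeated block, and that rigid graphs of every required $|V|+|E|$ exist), whereas the paper's family makes the arithmetic a two-line computation.
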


As mention above, the main idea is to consider functors that preserve automorphism groups from the categories of these algebraic structures to the category of graphs. Graphs are easy to play with and one can easily modify a graph without changing its automorphism group. By considering well chosen graphs, we get similar results for graphs where the "size" of a graph is either the number of vertices or the number of vertices plus the number of edges. The previous results are then consequences of the following.

\begin{thm}[cf Corollary \ref{cor:main graphs} and Corollary \ref{cor:main graphs2}]\label{thm:intro graph}
Let $r\in\Qb\cap]0,+\infty[$. 
\begin{enumerate}
\item There exists a graphs $\Gamma$ such that $r=\dfrac{|\Aut_\Graphs(\Gamma)|}{|V|}$ where $V$ is the set of vertices of $\Gamma$.
\item There exists a graphs $\Gamma$ such that $r=\dfrac{|\Aut_\Graphs(\Gamma)|}{|V|+|E]}$ where $V$ and $E$ are the sets of, respectively, vertices and edges of $\Gamma$.
\end{enumerate}
\end{thm}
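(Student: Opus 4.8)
The plan is to prove both statements with a single family of graphs, using the symmetric groups as the source of automorphisms and a rigid graph as an adjustable ``ballast'' that raises the size without creating new symmetries. Recall the classical fact that there is an integer $n_0$ such that for every $n\ge n_0$ there exists a connected graph on $n$ vertices with trivial automorphism group; call such a graph \emph{rigid}. The basic building block is the disjoint union $\Gamma=\overline{K_m}\sqcup R$, where $\overline{K_m}$ is the graph on $m$ vertices with no edges and $R$ is a connected rigid graph on at least two vertices. The connected components of $\Gamma$ are the $m$ isolated vertices (mutually isomorphic copies of $K_1$) together with the single component $R$ (which, having at least two vertices, is not a $K_1$), so any automorphism permutes the isolated vertices arbitrarily and fixes $R$ pointwise; hence
\[
\Aut_\Graphs(\Gamma)\cong S_m,\qquad |\Aut_\Graphs(\Gamma)|=m!.
\]

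For part (1), write $r=p/q$ with $p,q$ positive integers and choose an integer $m\ge p$, so that $p\mid m!$. Put $N=\dfrac{m!\,q}{p}$, which is an integer because $p\mid m!$ and which satisfies $N\ge m+n_0$ once $m$ is large enough, since the numerator grows factorially in $m$. I then take for $R$ a connected rigid graph on $N-m\ge n_0$ vertices, which exists by the fact recalled above and which has at least two vertices, so the computation of $\Aut_\Graphs(\Gamma)$ above applies. The graph $\Gamma=\overline{K_m}\sqcup R$ has $|V|=m+(N-m)=N$ vertices, whence
\[
\frac{|\Aut_\Graphs(\Gamma)|}{|V|}=\frac{m!}{N}=\frac{p}{q}=r .
\]

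For part (2), I keep the same skeleton $\Gamma=\overline{K_m}\sqcup R$. Since $\overline{K_m}$ has no edges, $|V|+|E|=m+\bigl(|V(R)|+|E(R)|\bigr)$, so it suffices to realise the required total as the edge-plus-vertex count of a rigid $R$. Choosing $m\ge p$ as before and setting the target $T=\dfrac{m!\,q}{p}-m$, I need a connected rigid graph $R$ with $|V(R)|+|E(R)|=T$ for every sufficiently large $T$; granting this, the identical computation gives $|\Aut_\Graphs(\Gamma)|/(|V|+|E|)=m!/(m!q/p)=r$. To produce such an $R$ I would start from a fixed rigid connected graph with a distinguished vertex and attach a pendant path at that vertex: attaching one path keeps the graph rigid, because an automorphism must fix the rigid core pointwise and then fix the attached path vertex by vertex, and each edge added this way increases $|V|+|E|$ by $2$, reaching all large values of one fixed parity. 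To reach the other parity I would start instead from a second rigid base differing from the first by a single, suitably placed chord, which shifts $|V|+|E|$ by $1$.

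The main obstacle is precisely this last point of part (2): verifying that one can keep the graph rigid while adjusting $|V|+|E|$ by both an even and an odd amount, that is, exhibiting the two rigid bases and checking that neither the chord nor the pendant paths create a new automorphism. This is a finite, explicit verification rather than a conceptual difficulty; everything else reduces to the clean identity $\Aut_\Graphs(\overline{K_m}\sqcup R)\cong S_m$ together with the divisibility choice $m\ge p$, which makes $N=m!q/p$ an integer and forces the desired ratio.
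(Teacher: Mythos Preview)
Your argument for part~(1) is correct and close in spirit to the paper's, though the execution differs. The paper works with a single explicit family $\Gamma_{p,q}$: a star with $p$ spikes and a tail of length $q$. For $p,q\ge 2$ one checks directly that $\Aut_\Graphs(\Gamma_{p,q})\cong\Sigma_p$, $|V_{p,q}|=p+q+1$ and $|E_{p,q}|=p+q$; writing $r=m/n$ with $m,n\ge 3$ and taking $p=m$, $q=n(m-1)!-m-1$ gives the ratio immediately. This is exactly your ``$S_m$ plus rigid ballast'' idea, except that the symmetric piece and the rigid piece are fused into one connected graph, so the paper never needs the black-box fact that connected rigid graphs exist on every sufficiently large vertex set.

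For part~(2) you have correctly flagged the gap yourself: your scheme needs, for every large $T$, a connected rigid $R$ with $|V(R)|+|E(R)|=T$, and the sketch with pendant paths and a parity-fixing chord is not yet a proof. In particular, the assertion that attaching a pendant path to a rigid core keeps the graph rigid can fail if the core already carries a pendant path of the same length rooted at a vertex of the same local type, so both the attachment vertex and the extra chord must be chosen and checked explicitly. The paper sidesteps this entirely: adding one isolated vertex to $\Gamma_{p,q}$ gives $\Gamma_{p,q}^+$ with $\Aut\cong\Sigma_p$ and $|V|+|E|=2(p+q+1)$, so with $p=2m$ and $q=n(2m-1)!-2m-1$ one gets $(2m)!/\bigl(2n(2m-1)!\bigr)=m/n=r$. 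The point is that once $|V|$ and $|E|$ are both affine in a free parameter $q$ while $\Aut$ stays $\Sigma_p$, part~(2) is no harder than part~(1); your disjoint-union construction loses this because the edge count of $R$ is not under direct control.
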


Finally, as an other application of the machinery developed here (and a realisability result from \cite{CLTV}), we get a somewhat similar results for finite dimensional evolution algebras (i.e. vector spaces with a bilinear inner product) over any field $k$ where we understand "size" as dimension.

\begin{thm}[cf Theorem \ref{thm: main evoalg}]\label{thm:intro evoalg}
Let $k$ be a field, and $r\in\Qb\cup ]0,\infty[$. Then there exists a
regular evolution $k$-algebra $A$ such that $r=\frac{|\Aut_{\kAlg}(A)|}{\dim_k(A)}$.
\end{thm}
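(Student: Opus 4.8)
The plan is to reduce the statement to the graph case already settled in Theorem~\ref{thm:intro graph}, transporting a well chosen graph into regular evolution algebras through an assignment that preserves automorphism groups, supplied by the realisability result of \cite{CLTV}. Concretely, I would use that \cite{CLTV} attaches to a graph $\Gamma=(V,E)$ (in a suitable class) a regular evolution $k$-algebra $A_\Gamma\in\kRegEvAlg$ enjoying two properties: first, $\Aut_{\kAlg}(A_\Gamma)\cong\Aut_{\Graphs}(\Gamma)$, so that every algebra automorphism is forced to be ``monomial'' and the automorphism group is finite of the same order as the graph automorphism group; and second, $\dim_k(A_\Gamma)$ equals a fixed combinatorial invariant of $\Gamma$, namely $|V|$ (or $|V|+|E|$, according to the precise construction). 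The first property is exactly a rigidity statement killing the diagonal/scaling automorphisms that a generic evolution algebra would carry, and it is the heart of the input we borrow.

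Granting this, the argument is short. Fix $r\in\Qb\cap\,]0,\infty[$ and apply the matching part of Theorem~\ref{thm:intro graph}: part~(1) if $\dim_k(A_\Gamma)=|V|$, and part~(2) if $\dim_k(A_\Gamma)=|V|+|E|$. This produces a graph $\Gamma$ whose quantity $|\Aut_{\Graphs}(\Gamma)|$ divided by the relevant size invariant equals $r$. Setting $A=A_\Gamma$, the two properties above yield
\[
\frac{|\Aut_{\kAlg}(A)|}{\dim_k(A)}=\frac{|\Aut_{\Graphs}(\Gamma)|}{|V|}=r \quad\text{(resp. with } |V|+|E|\text{)},
\]
and $A$ is a regular evolution $k$-algebra by construction, as required. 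This works uniformly in $k$, since the realisability assignment is defined over an arbitrary field. If instead \cite{CLTV} is phrased as a bare group-realisability statement (every finite group is $\Aut_{\kAlg}$ of a regular evolution algebra), one reaches the same conclusion by first fixing a cyclic group of the appropriate order to control the numerator and then padding by an orthogonal direct sum with a rigid regular evolution algebra to control the dimension, the automorphism group of the sum splitting as a product provided the two summands share no common direct factor.

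The main obstacle is not the transport step but the verification, over an arbitrary field $k$, of the two properties of $\Gamma\mapsto A_\Gamma$, which is precisely what \cite{CLTV} is invoked for. The delicate point is the rigidity: one must check that every $k$-algebra automorphism of $A_\Gamma$ is compelled to permute the natural basis according to a graph automorphism, with no residual scaling freedom, so that $\Aut_{\kAlg}(A_\Gamma)$ is genuinely finite and isomorphic to $\Aut_{\Graphs}(\Gamma)$ rather than merely containing it. A secondary point is the compatibility of hypotheses: the graphs produced by Theorem~\ref{thm:intro graph}, typically disjoint unions of a rigid padding graph with a group-realising graph, must lie in the class for which \cite{CLTV} guarantees both regularity and rigidity; if they do not, one adjusts the padding graph so that the resulting algebra stays admissible while preserving both the automorphism group and the size invariant.
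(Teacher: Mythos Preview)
Your proposal is correct and is essentially the paper's own argument: invoke the graph--to--evolution-algebra construction from \cite{CLTV} (stated in the paper as Theorem~\ref{thm:graph to evoalg}, giving $\dim_k A(\Gamma)=|V|+|E|$ and $\Aut_{\kAlg}(A(\Gamma))\cong\Aut_{\Graphs}(\Gamma)$), and feed it a graph produced by part~(2) of Theorem~\ref{thm:intro graph} (equivalently Corollary~\ref{cor:main graphs2}). Your caveats about rigidity and admissibility are exactly the content of \cite{CLTV} that the paper cites as a black box, and the specific graphs $\Gamma_{p,q}^+$ used here are ordinary finite simple graphs, so no compatibility issue arises; your fallback direct-sum/padding strategy is unnecessary.
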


\subsection*{Organisation of the paper}
In Section \ref{sec:algebraic structure} we specified the categories of graphs and algebraic structures we are working with here. We are more expansive on partial groups as it is less common. In Section \ref{sec:algebraic structure to graph} we gives explicite functors that preserve automorphims groups from these categories of algebraic structures to the category of graphs. Section \ref{sec:graphs} gives the graphs that will be the key ingredients, describes their automorphisms and how we get Theorem \ref{thm:intro graph} above. Finally, theorems \ref{thm:intro mon}, \ref{thm:intro part}, \ref{thm:intro poset} and \ref{thm:intro evoalg}  are proven in Section \ref{sec:applications} combining the results from Sections \ref{sec:algebraic structure to graph} and \ref{sec:graphs}.

%
%
%

\subsection*{Notations}
In these notes, if $\Cc$ is a category, for every object $x$ of $\Cc$, we will denote by $\Aut_\Cc(x)$ the group of automorphisms of $x$ in $\Cc$. Also if $X$ is a finite set $|X|$ will denote the size of $X$ and $\Wb(X)$ will denote the free monoid of the finite words on $X$, where the elements will be denoted by a list of elements of $X$,  and for two words $u,v\in\Wb(X)$, $u\circ v$ will denote the concatenation of $u$ and $v$. We often identify $X$ as the subset $\{(x)\mid x\in X\}$ of words of length 1 in $\Wb(X)$. Given two sets $X$ and $Y$ and a map $\varphi\colon X\to Y$ we will denote by $\Wb(\varphi)\colon \Wb(X)\to\Wb(Y)$ the induce map by $\varphi$ defined, for all $u=(x_1,x_2,\dots,x_n)\in\Wb(X)$, by $\Wb(\varphi)(u)=(\varphi(x_1),\varphi(x_2),\dots,\varphi(x_n))$.
For $k$ a field and $V$ a $k$-vector space of finite dimension, we will denote by $\dim_k(V)$ its dimension.
Finally, for $p\geq 1$, $\Sigma_p$ will denote the symmetric group on $\{1,2,\dots,p\}$ which is of order $p!$. 

\subsection*{Acknowledgement} The author is thankful to Antonio Viruel for pointing out a mistake in the first draft of this paper and suggesting the application to evolution algebras.

\section{recalls and notations on the algebraic structures}\label{sec:algebraic structure}
We recall here the algebraic structures and give the associated notations that we use in this paper.

\subsection{Evolution algebra}
Given a field $k$, a $k$-\emph{algebra} is a $k$-vector space $A$ together with a $k$-bilinear inner product $A\times A\to A$. Given two $k$-algebras $A$ and $B$, a \emph{algebra homomorphism} of $k$-algebras from $A$ to $B$ is a linear map from $A$ to $B$ which commutes with inner product. With this notion of morphism, the class of $k$-algebras defines a category $\kAlg$.

A $k$-algebra $A$ will be called an \emph{evolution algebra} if it exists a basis $B=\{b_i\mid i\in I\}$ such that $b_ib_j=0$ whenever $i\neq j$. Such a basis will be called a \emph{natural basis} of $A$. Finally, an evolution algebra will be called a \emph{regular evolution algebra} if moreover $A^2=A$. Evolution algebra are commutative but possibly not assoiative algebras that were introduced by Tian \cite{Ti} to study reproduction in non-Mendelian genetic. We will denote by $\kEvAlg$, respectively $\kRegEvAlg$, the full subcategories of evolution $k$-algebras and regular evolution $k$-algebras.

\subsection{Graphs}
We work here with the category $\Graphs$ of graphs which are undirected and simple (without loops or multiple edges). More precisely, a \emph{graph} is a couple $(V,E)$ with $V$ a set, called the set of \emph{vertices} and $E$ a set of subsets of $V$ of size 2, called the set of \emph{edges}. A vertex $x\in V$ will be called \emph{isolated} if there is no edge of $E$ containing $x$.
 A \emph{graph homomorphism} from a graph $\Gamma_1=(V_1,E_1)$ to a graph $\Gamma_2=(V_2,E_2)$ is then a map $f\colon V_1\to V_2$ such that, for any $e\in E_1$, $\overline{f}(e)\in E_2$ where $\overline{f}\colon \Pc(V_1)\to \Pc(V_2)$ is the mapping, induced by $f$, from $\Pc(V_1)$ the set of all subsets of $V$ to $\Pc(V_2)$ the set of all subsets of $V_2$. Finally, we will denote by $\InjGraphs$ the subcategory of $\Graphs$ with the same objects but with only injective graph homomorphisms. Notice that for any graph $\Gamma$, $\Aut_\InjGraphs (\Gamma)=\Aut_\Graphs (\Gamma)$ since every automorphism is injective.  

\subsection{Monoids}
Recall that a \emph{monoid} is a couple $(M,\ast)$ where $M$ is a set and $\ast\colon M\times M\to M$ is a binary operation which is associative and has a neutral element. A \emph{monoid homomorphism} $\varphi\colon (M_1,\ast_1)\to(M_2,\ast_2)$ is a mapping such that for all $m,n\in M_1$, $\varphi(m\ast_1 n)= \varphi(m)\ast_2\varphi(n)$ and such that $\varphi(e_1)=e_2$ where $e_1$ and $e_2$ are the neutral elements of $M_1$ and $M_2$ respectively. With this notion of morphism, the class of monoids defines a category $\Monoids$. The category $\Groups$ of groups is the full subcategory of $\Monoids$ consisting of the monoids $(M,\ast)$ such that every element of $M$ has an inverse.

\subsection{Partial groups}
The notion of partial group is less common and we give slightly more details. We refer the reader to \cite[Section 2]{Ch0} or \cite[Section 1]{Ch1} for more comprehensive introductions about this structure.
\medskip

\begin{defi}\label{def:PG}
Let $\Mc$ be a set and $\Db\subseteq \Wb(\Mc)$ be a subset such that,
\begin{enumerate}[label=(D\arabic*)]
\item\label{cond:D1} $\Mc\subseteq \Db$; and
\item\label{cond:D2} $u\circ v\in\Db\Rightarrow u,v\in\Db(\Mc)$ (in particular, $\emptyset\in\Db(\Mc)$).
\end{enumerate}
A mapping $\Pi:\Db\rightarrow\Mc$ is a \emph{product} on $\Mc$ if
\begin{enumerate}[label=(P\arabic*)]
\item\label{cond:P1} $\Pi$ restricts to the identity on $\Mc$; and
\item\label{cond:P2} if $u\circ v\circ w\in \Db(\Mc)$ then $u\circ \Pi(v)\circ w\in\Db(\Mc)$ and
\[\Pi(u\circ v\circ w)=\Pi\left(u\circ\Pi(v)\circ w\right).\]
\end{enumerate}
The \emph{unit} of $\Pi$ is then defined as $\Pi(\emptyset)$ that we will denote $1_\Mc$ or $1$ when there is no ambiguity.
A \emph{partial monoid} is a triple $(\Mc,\Db,\Pi)$ where $\Pi$ is a product defined on $\Db(\Mc)$.

An \emph{inversion} on $\Mc$ is an involutory bijection $x\mapsto x^{-1}$ on $\Mc$ together with the induced mapping  $u\mapsto u^{-1}$ on $\Wb(\Mc)$ defined by,
\[u=(x_1,x_2,\dots,x_n)\mapsto (x_n^{-1},x_{n-1}^{-1},\dots,x_1^{-1}).\]
A \emph{partial group} is then a tuple $\left(\Mc,\Db,\Pi,(-)^{-1}\right)$ where $\Pi$ is a product on $\Db(\Mc)$ and $(-)^{-1}$ is an inversion on $\Mc$ satisfying
\begin{enumerate}[resume,label=(P\arabic*)]
\item\label{cond:P3} if $u\in\Db$ then $(u^{-1},u)\in\Db$ and $\Pi(u^{-1}\circ u)=1$.
\end{enumerate}
\end{defi}

To simplify the notation, we will use $(\Mc,\Db)$ or even juste $\Mc$ to refer to a partial group $\left(\Mc,\Db,\Pi,(-)^{-1}\right)$ when the rest of the data is understood or not used. Also, a word $w\in\Db$ will be called \emph{non-degenerate} if there is no $1$ in $w$. Notice that, thanks to axiom \ref{cond:P2} where we can take $v$ to be empty, $\Db$ is totally determined by its non-degenerated word.

\begin{ex}
 Let $(\Mc,\Db)$ be a partial group.  if the domain $\Db=\Wb(\Mc)$ then $\Mc$ is a group via the binary operation $(x,y)\in\Mc^2\mapsto \Pi(x,y)\in\Mc$. 
\end{ex}

%

\begin{defi}
Let $\left(\Mc_1,\Db(\Mc_1),\Pi_1,(-)^{-1}\right)$ and $\left(\Mc_2,\Db(\Mc_2),\Pi_2,(-)^{-1}\right)$ be two partial group.
A \emph{partial groups homomorphism} is a map $\varphi\colon \Mc_1\to \Mc_2$ such that 
\begin{enumerate}[label=(H\arabic*)]
\item $\Wb(\varphi)\left(\Db(\Mc_1)\right)\subseteq \Db(\Mc_2)$;
\item for any $u\in\Db(\Mc_1)$, $\Pi_2\left(\Wb(\varphi)(u)\right)=\varphi\left(\Pi_1(u)\right)$.
\end{enumerate}
\end{defi}

With this notion of homomorphisms and the usual composition on maps, we have a category $\Part$ which contains $\Groups$ as a full subcategory.

\subsection{Posets}
We will also work here with posets. Recall that a \emph{poset} is a couple $(X,\leq)$ where $X$ is a set and $\leq$ a partial order, i.e. a homogeneous relation which is reflexive, antisymmetric, and transitive. A \emph{poset homomorphism} $\varphi\colon (X_1,\leq_1)\to (X_2\leq_2)$ is then a mapping $\varphi\colon X_1\to X_2$ which is order-preserving, i.e. for all $x,y\in X_1$, if $x\leq_1 y$, then $\varphi(x)\leq_2\varphi(y)$. Endowed with this notion of morphism, the class of posets defines a category that we will denote by $\Posets$. 
\section{From graphs to algebra, monoids, partial groups and posets}\label{sec:algebraic structure to graph}

We give here the functors that we will use to prove the main results.

\subsection{From graphs to evolution algebras}

The following construction is due to Costaya, Ligouras, Tocino and Viruel \cite{CLTV}.

Let $\Gamma=(V,E)$ be a graph. We define $A(\Gamma)$ as the evolution algebra with natural basis $B=E\cup V$ and where the inner product is given on the natural basis by, for all $v\in V$ and 
\[ v^2=v \qquad \text{and} \qquad e^2=e+\sum_{u\in e}u.\]

This gives a regular evolution algebra as shown in \cite[Lemma 3.3]{CLTV} and it defines a faithul functor
\[A\colon \Graphs \to \kRegEvAlg.\]

\begin{thm}[{cf. \cite[Theorem 3.1]{CLTV}}]\label{thm:graph to evoalg}
et $\Gamma=(V,E)$ be a graph. 
Then $\dim_k(A(\Gamma))=|E|+|V|$ and  $\Aut_{Graphs}(\Gamma)\cong\Aut_{\kAlg}(A(\Gamma))$. 
\end{thm}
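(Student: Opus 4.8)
The plan is to verify the two claims of the theorem separately: the dimension count and the isomorphism of automorphism groups.

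**Dimension.** The paper states the construction: given a graph $\Gamma=(V,E)$, the evolution algebra $A(\Gamma)$ has natural basis $B=E\cup V$. Wait — I should check whether $E$ and $V$ are disjoint (they should be, since elements of $E$ are size-2 subsets of $V$, hence never single vertices). So $|B|=|E|+|V|$ gives the dimension immediately. This step is essentially definitional.

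**Automorphism groups.** Let me think about the map in both directions.

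**Forward direction (graph auto → algebra auto).** Given $\sigma\in\Aut_{\Graphs}(\Gamma)$, I'd define $A(\sigma)$ on the natural basis. Since $\sigma$ is a graph automorphism, it permutes vertices and (via $\overline{\sigma}$) edges. So set $A(\sigma)(v)=\sigma(v)$ for $v\in V$ and $A(\sigma)(e)=\overline{\sigma}(e)$ for $e\in E$, extending linearly. I need to check this commutes with the inner product. On the basis: $v^2=v$ maps to $\sigma(v)=\sigma(v)^2$ ✓; and $e^2=e+\sum_{u\in e}u$ maps to $\overline{\sigma}(e)+\sum_{u\in e}\sigma(u)=\overline{\sigma}(e)+\sum_{w\in\overline{\sigma}(e)}w=\overline{\sigma}(e)^2$ ✓ (using that $\sigma$ bijects the two endpoints of $e$ onto the two endpoints of $\overline{\sigma}(e)$). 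Cross terms $b_ib_j=0$ are preserved. This gives the functor $A$ on morphisms, and it's injective on automorphisms since $\sigma$ is recovered from $A(\sigma)|_V$.

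**Reverse direction (algebra auto → graph auto).** This is the crux. Given $\phi\in\Aut_{\kAlg}(A(\Gamma))$, I need to show it comes from a graph automorphism. The key is that $\phi$ must preserve the natural basis (up to the algebra structure) so that it restricts to a permutation of $V$ respecting edges. The hard part is showing $\phi$ permutes the distinguished basis elements — or at least permutes $V$ and $E$ in a compatible way — since a priori an algebra automorphism is just a linear map commuting with the product, and natural bases of an evolution algebra need not be unique or preserved.

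**The main obstacle.** The genuine difficulty is the reverse direction: characterizing the basis elements $V$ and $E$ intrinsically, in terms of the algebra structure alone, so that any automorphism must respect that characterization. I'd look for algebraic invariants distinguishing vertex-type from edge-type basis vectors: for instance, $v^2=v$ makes each $v$ an idempotent, whereas $e^2=e+\sum_{u\in e}u\neq e$, so edge-basis-vectors are not idempotent. One can try to show the set $\{v\in V\}$ is exactly the idempotents lying in the natural basis, and recover the "incidence" $u\in e$ from the equation $e^2=e+\sum_{u\in e}u$, i.e. from which idempotents appear when squaring an edge element. The delicate point is ruling out exotic idempotents or exotic automorphisms that mix basis vectors; here I would lean on the fact that this is precisely \cite[Theorem 3.1]{CLTV}, and the cited result presumably establishes that the natural basis is rigid enough (the graph being simple, with edges of size exactly $2$, makes the squaring map encode the incidence structure faithfully). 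Assuming that structural rigidity, $\phi$ induces a permutation of $V$ preserving adjacency, i.e. a graph automorphism, and one checks this is inverse to the forward map, completing the isomorphism $\Aut_{\Graphs}(\Gamma)\cong\Aut_{\kAlg}(A(\Gamma))$.
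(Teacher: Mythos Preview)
Your proposal is correct and matches the paper's approach: the dimension is read off from the natural basis $B=E\cup V$, and the automorphism isomorphism is deferred to the cited result in \cite{CLTV} (the paper invokes \cite[Lemma 3.7]{CLTV} rather than Theorem 3.1, but the substance is the same). Your sketch of the forward map and your identification of the reverse direction as the nontrivial part are accurate, and in fact more detailed than the paper's one-line proof.
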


\begin{proof}
By construction, $A(\Gamma)$ has a basis of size $|E|+|V|$ and so $\dim_k(A(\Gamma))=|E|+|V|$ and the isomrophism on automorphism groups is a consequences of \cite[Lemma 3.7]{CLTV}.
\end{proof}

\begin{rem}\label{rmk:realisability evoalg}
In \cite{SZ}, Sriwongsa and Zou give another construction of a regular evolution $k$-algebra $E$ from a graph $\Gamma=(V,E)$ with automorphism group isomorphic to the one of $\Gamma$ but this time the dimension of $E$ is $|V|$ instead of $|V|+|E|$. 
\end{rem}

\subsection{From graphs to monoids}\label{sec:graph to mon}

The following construction is well known but the author couldn't find a reference in the literature. However there is a discussion on math stack exchange \cite{monoid} highlighting it.

Let $\Gamma=(V,E)$ be a graph. Set $M(\Gamma)$ be the union of $V$, $E$ and two arbitrary elements denoted $0$ and $1$ 
\[M(\Gamma)=V\sqcup E\sqcup \{0,1\}.\]
We then define a product $\ast\colon M(\Gamma)\times M(\Gamma)\to M(\Gamma)$ setting, $0$ as a absorbing element on $M(\Gamma)\setminus\{1\}$ (i.e. for all $\alpha\in M(\Gamma)\setminus \{1\}$, $0\ast \alpha=\alpha\ast 0=0$), $1$ as a neutral element (i.e. for all $\alpha\in M(\Gamma)$, $1\ast\alpha=\alpha\ast 1=\alpha$) and for all $x,x_1,x_2\in V$, $e,e_1,e_2\in E$
\[x_1\ast x_2=\begin{cases} x_1 &\text{if $x_1=x_2$}\\ 0&\text{else}\end{cases}\qquad\qquad x\ast e=e\ast x=\begin{cases} e &\text{if $x\in e$}\\ 0&\text{else,}\end{cases}\qquad\qquad  e_1\ast e_2=0.\]

\begin{prop}
Let $\Gamma=(V,E)$ be a graph. The couple $(M(\Gamma),\ast)$ is a monoid and it yields a functor 
\[M\colon \InjGraphs \to \Monoids.\]
\end{prop}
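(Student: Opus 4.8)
The plan is to prove the two assertions in turn: first that $(M(\Gamma),\ast)$ is a monoid, and then that $\Gamma\mapsto M(\Gamma)$ extends to a functor on $\InjGraphs$. The existence of a neutral element is immediate, since $1$ was declared neutral; the real content is \emph{associativity}, $(\alpha\ast\beta)\ast\gamma=\alpha\ast(\beta\ast\gamma)$ for all $\alpha,\beta,\gamma\in M(\Gamma)$. First I would dispose of the two distinguished elements. If any factor equals $1$ it may be deleted, since $1$ is neutral, reducing to an associativity instance with fewer factors. Next observe that any $\ast$-product of two elements of $M(\Gamma)\setminus\{1\}$ again lies in $M(\Gamma)\setminus\{1\}=V\sqcup E\sqcup\{0\}$, because the defining table never outputs $1$; consequently, if any factor equals $0$ then both bracketings collapse to $0$ by absorption (every intermediate product stays outside $\{1\}$, so $0$ keeps absorbing). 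This leaves the genuinely nontrivial case where $\alpha,\beta,\gamma\in V\sqcup E$, which is where the argument lives.

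On $V\sqcup E$ I would argue by a finite case analysis according to the type (vertex or edge) of each of $\alpha,\beta,\gamma$, i.e.\ the eight patterns $VVV, VVE,\dots, EEE$. Since $\ast$ is visibly commutative, swapping the outer two arguments pairs these patterns and roughly halves the work, and in each pattern both bracketings can be read off directly from the incidence relation $x\in e$. One then checks that the two triple products agree as elements of $V\sqcup E\sqcup\{0\}$. The only patterns requiring real care are those mixing vertices and edges, where the outcome is governed by whether a vertex is incident to an edge.

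I expect this case analysis to be the main obstacle: it is elementary but must be carried out exhaustively and with attention to the borderline configurations, in particular a pair of distinct vertices together with the edge they span, which is exactly the place where associativity is delicate and where the product definition must interact correctly with the incidence data.

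For functoriality, define $M$ on objects as above, and for an injective graph homomorphism $f\colon\Gamma_1\to\Gamma_2$ set $M(f)(x)=f(x)$ on vertices, $M(f)(e)=\overline{f}(e)$ on edges, and $M(f)(0)=0$, $M(f)(1)=1$. Injectivity of $f$ guarantees that $\overline{f}(e)$ is again a two-element set, hence a genuine edge of $\Gamma_2$ (it is an edge because $f$ is a graph homomorphism), so $M(f)$ is well defined with values in $M(\Gamma_2)$. I would then verify $M(f)(\alpha\ast\beta)=M(f)(\alpha)\ast M(f)(\beta)$ by the same type-based case analysis; here injectivity is precisely what transports the conditions defining the product, namely the equivalences $x_1=x_2\iff f(x_1)=f(x_2)$ and $x\in e\iff f(x)\in\overline{f}(e)$. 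The nontrivial implications $f(x_1)=f(x_2)\Rightarrow x_1=x_2$ and $f(x)\in\overline{f}(e)\Rightarrow x\in e$ both fail without injectivity, which is exactly why the source category is $\InjGraphs$ rather than $\Graphs$. Finally, $M(\Id_{\Gamma})=\Id_{M(\Gamma)}$ and $M(g\circ f)=M(g)\circ M(f)$ follow immediately from the corresponding identities for $f$ and $\overline{f}$ on vertices and edges, completing the verification that $M$ is a functor.
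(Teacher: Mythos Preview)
Your plan mirrors the paper's own: the paper simply declares the monoid axioms ``a direct consequence of the definition and left to the reader'' and argues functoriality from the fact that an injective graph map satisfies $x\in e\iff f(x)\in\overline f(e)$. Your outline is more thorough and correctly pinpoints why injectivity is essential for transporting both defining conditions of the product.

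The trouble is that the case analysis you announce cannot be completed with the product table as given. In the very $VVE$ configuration you single out as delicate---two distinct vertices together with the edge they span---associativity fails outright: if $x_1\neq x_2$ and $e=\{x_1,x_2\}\in E$ then
\[
(x_1\ast x_2)\ast e \;=\; 0\ast e \;=\; 0,
\qquad\text{while}\qquad
x_1\ast(x_2\ast e) \;=\; x_1\ast e \;=\; e,
\]
because $x_1\ast x_2=0$ but $x_2\ast e=e$ and $x_1\ast e=e$. Thus $(M(\Gamma),\ast)$ is \emph{not} associative as soon as $E\neq\emptyset$, and neither your proposed check nor the paper's ``left to the reader'' can go through with the stated definition. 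Your instinct that this is the critical spot was exactly right; what is missing is not an argument but a correction of the multiplication table itself. The functoriality half of your outline is sound and would apply verbatim once the product is amended so that associativity actually holds.
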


\begin{proof}
The fact that $(M(\Gamma),\ast)$ is a monoid is a direct consequence of the definition and left to the reader. If $f\colon \Gamma_1=(V_1,E_1)\to \Gamma_2=(V_2,E_2)$ is a, injective graph homomorphism, it sends a vertex to a vertex, an edge to an edge and, for all $x\in V_1$ and $e\in E_1$, $x\in e_1$ if and only if $f(x)\in \overline{f}(e_1)$. Thus it induces a well define monoid homomorphism $\overline{f}\colon (M(\Gamma_1)\to(M(\Gamma_1)$ (which must sends $0$ to $0$ and $1$ to $1$). This construction preserves composition and thus defines a functor $M\colon \Graphs \to \Monoids$.
\end{proof}	

Notice that we have to work with injective graph homomorphism. Indeed, if one consider $\Gamma_1$ to be a graph with two isolated vertices $x_1$ and $x_2$ and no edges, $Gamma_2$ to be the graph with only one vertex $x_0$ and $f\colon \Gamma_1\to \Gamma_2$ to be the unique map from $\Gamma_1$ to $\Gamma_2$ then we should have $1x_0=x_0\ast x_0=f(x_1)\ast f(x_2)=f(x_1\ast x_2)=f(x_1)\ast f(x_2)=f(0)=0$ which give then a contradiction.

The following Lemma is also a direct consequence of the definition of $M(\Gamma)$.
\begin{lem}\label{lem:properties of M(Gamma)}
Let $\Gamma=(V,E)$ be a graph.
\begin{enumerate}[label=(\roman*)]
\item\label{item:properties of M(Gamma)(i)} $M(\Gamma)$ is commutative.
\item\label{item:properties of M(Gamma)(ii)} For all $\alpha\in M(\Gamma)\setminus\{0,1\}$, $\alpha\ast \alpha=\alpha$ if and only if $\alpha\in V$.
\item\label{item:properties of M(Gamma)(iii)} For all $e\in M(\Gamma)\setminus\{0\}$, $\alpha\ast \alpha=0$ if and only if $\alpha\in E$. 
\item\label{item:properties of M(Gamma)(iv)} For all $x\in V$ and $e\in E$, $x\ast e\neq 0$ if and only if $x\in e$.
\item\label{item:properties of M(Gamma)(v)} If $\Gamma$ is finite then so is $M(\Gamma)$ and $|M(\Gamma)|=|V|+|E|+2$.
\end{enumerate}
\end{lem}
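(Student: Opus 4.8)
The plan is to verify each of the five assertions by a direct case analysis on the three types of elements in $M(\Gamma) = V\sqcup E\sqcup\{0,1\}$, reading off the value of the relevant product from the defining piecewise formulas. Since the product is specified according to whether each factor lies in $V$, in $E$, or is one of the distinguished elements $0$ and $1$, the essential input for every part is the table of ``diagonal'' values: $1\ast 1 = 1$ (as $1$ is neutral), $0\ast 0 = 0$ (as $0$ is absorbing on $M(\Gamma)\setminus\{1\}$ and $0$ itself lies in that set), $x\ast x = x$ for each $x\in V$, and $e\ast e = 0$ for each $e\in E$. Once this table is in hand, every statement is immediate.

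For (i) I would check symmetry of $\ast$ clause by clause: $0$ is declared absorbing on both sides, $1$ neutral on both sides, the vertex--vertex rule depends only on the symmetric condition $x_1 = x_2$, the rule $x\ast e = e\ast x$ is symmetric by definition, and $e_1\ast e_2 = 0 = e_2\ast e_1$. Parts (ii), (iii) and (iv) then follow from the diagonal table. For (ii), among $\alpha\notin\{0,1\}$ the only idempotents are the vertices, since $x\ast x = x$ but $e\ast e = 0\neq e$; the reason $0$ and $1$ must be excluded is precisely that both are idempotent. For (iii), among $\alpha\neq 0$ the square vanishes exactly on the edges, since $e\ast e = 0$ while $x\ast x = x\neq 0$ and $1\ast 1 = 1\neq 0$; here $0$ is excluded because $0\ast 0 = 0$. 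Part (iv) is merely a restatement of the defining clause $x\ast e = e$ if $x\in e$ and $0$ otherwise.

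Finally, for (v) I would observe that the underlying set is by construction the disjoint union $V\sqcup E\sqcup\{0,1\}$, so finiteness of $V$ and $E$ yields finiteness of $M(\Gamma)$, and counting the three disjoint pieces gives $|M(\Gamma)| = |V| + |E| + 2$. I do not anticipate any genuine obstacle, in keeping with the remark that this is a direct consequence of the definition: the only points demanding care are the bookkeeping of the excluded elements in (ii) and (iii), dictated by the facts that $0$ and $1$ are both idempotent and that $0$ squares to $0$, and reading the ambient set in (iii) as $M(\Gamma)\setminus\{0\}$ with its quantified variable taken to be $\alpha$.
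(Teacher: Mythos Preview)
Your proposal is correct and matches the paper's approach: the paper gives no proof at all, simply declaring the lemma ``a direct consequence of the definition of $M(\Gamma)$'', and your case-by-case verification is precisely the routine check that this phrase invites. Your handling of the excluded elements in (ii) and (iii), and your reading of the quantifier in (iii) as ranging over $\alpha\in M(\Gamma)\setminus\{0\}$ despite the typo, are both appropriate.
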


We now look at isomorphisms. 
\begin{lem}\label{lem:mon to graph}
Let $\Gamma_1=(V_1,E_1)$ and $\Gamma_2=(V_2,E_2)$ be graphs and $\varphi\colon M(\Gamma_1)\to M(\Gamma_2)$ be an monoid isomorphism. 
\begin{enumerate}[label=(\alph*)]
\item\label{item:mon to graph (a)} $\varphi(0)=0$, $\varphi(V_1)=V_2$ and $\varphi(E_1)=E_2$.
\item\label{item:mon to graph (b)} Let $x,y\in V_1\subseteq M(\Gamma_1)$ with $x\neq y$. If $\{x,y\}\in E_1$ then $\{\varphi(x),\varphi(y)\}\in E_2$ and $\varphi(\{x,y\})=\{\varphi(x),\varphi(y)\}$. 
\end{enumerate}
\end{lem}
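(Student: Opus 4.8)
The plan is to recover the combinatorial data of $\Gamma_1$ from the algebra of $M(\Gamma_1)$ and transport it along $\varphi$. First I would locate the two distinguished elements. Since $\varphi$ is a monoid homomorphism it automatically preserves neutral elements, so $\varphi(1)=1$. For $0$, the key observation is that it is the unique \emph{absorbing} element of $M(\Gamma_1)$: one checks $0\ast\alpha=\alpha\ast 0=0$ for every $\alpha$, and if $z$ were another absorbing element then $z=z\ast 0=0$. Being absorbing is an algebraic property transported by any surjective homomorphism, so $\varphi(0)$ is absorbing in $M(\Gamma_2)$ and hence equals $0$. Consequently $\varphi$ restricts to a bijection from $V_1\sqcup E_1=M(\Gamma_1)\setminus\{0,1\}$ onto $V_2\sqcup E_2$.

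Next I would separate vertices from edges using the internal characterizations of Lemma \ref{lem:properties of M(Gamma)}. Among the elements of $M(\Gamma_1)\setminus\{0,1\}$, the vertices are exactly the idempotents (item (ii)) and the edges are exactly the nonzero square-zero elements (item (iii)). Since $\varphi$ preserves products and fixes both $0$ and $1$, it carries idempotents to idempotents and nonzero square-zero elements to nonzero square-zero elements; applying the same reasoning to $\varphi^{-1}$ upgrades the resulting inclusions to the equalities $\varphi(V_1)=V_2$ and $\varphi(E_1)=E_2$. This gives part (a).

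For part (b), the remaining ingredient is the incidence relation, which is detected algebraically by item (iv): for $x\in V$ and $e\in E$ one has $x\in e$ if and only if $x\ast e\neq 0$. Given $x\neq y$ in $V_1$ with $e:=\{x,y\}\in E_1$, both $x$ and $y$ lie on $e$, so $x\ast e\neq 0$ and $y\ast e\neq 0$. Pushing these through $\varphi$ and using that $\varphi(e)\in E_2$ is nonzero, I get $\varphi(x)\ast\varphi(e)\neq 0$ and $\varphi(y)\ast\varphi(e)\neq 0$, whence by item (iv) both $\varphi(x)$ and $\varphi(y)$ are endpoints of the edge $\varphi(e)$. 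As $\varphi$ is injective these two endpoints are distinct, and since an edge has exactly two endpoints we conclude $\varphi(\{x,y\})=\varphi(e)=\{\varphi(x),\varphi(y)\}\in E_2$.

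I expect the only subtle point to be the intrinsic identification of $0$ in the first step: because $0$, $1$, and every vertex are idempotent, idempotency alone does not isolate $0$, and it is the absorbing property (rather than any square or fixed-point condition) that pins it down. Once $0$ and $1$ are correctly identified, the rest is a routine transport of the characterizations recorded in Lemma \ref{lem:properties of M(Gamma)} across the isomorphism.
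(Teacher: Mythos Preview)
Your proof is correct and follows essentially the same approach as the paper: identify $1$ and $0$ by their algebraic roles, use the idempotent characterization from Lemma~\ref{lem:properties of M(Gamma)} to pin down vertices, and then use the incidence criterion (iv) together with injectivity to recover edges. The only cosmetic differences are that the paper obtains $\varphi(E_1)=E_2$ by complementation rather than by invoking item~(iii), and it pushes the identities $x\ast e=e$ through $\varphi$ instead of the weaker $x\ast e\neq 0$; neither changes the argument in any substantive way.
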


\begin{proof}
Notice that we have $\varphi(1)=1$ (as $\varphi$ is a monoid homomorphism) and $\varphi(0)=0$ since $0$ is the only absorbing element on $M(\Gamma_i)\setminus \{1\}$ for $i\in\{1,2\}$.
Moreover, a monoid homomorphism sends an idempotent to an idempotent, thus by Lemma \ref{lem:properties of M(Gamma)}\ref{item:properties of M(Gamma)(ii)} we have $\varphi(V_1)=V_2$ (considering $\varphi$ and its inverse). finally, as $\varphi$ is bijective, \[\varphi(E_1)=\varphi\left(M(\Gamma_1)\setminus \left(V_1\cup \{0,1\}\right)\right)=M(\Gamma_2)\setminus \varphi\left(V_1\cup \{0,1\}\right)=  M(\Gamma_2)\setminus \left(V_1\cup \{0,1\}\right)=E_2\]
which conclude the proof of item \ref{item:mon to graph (a)}.

Now let  $x,y\in V_1\subseteq M(\Gamma_1)$ with $x\neq y$ such that $e=(x,y)\in E_1$. by \ref{item:mon to graph (a)}, $\varphi(e)\in E_2$. Moreover, since  $x\ast e=e$ and $y\ast e=e$, we have that $\varphi(x)\ast \varphi(e)=\varphi(e)$ and $\varphi(y)\ast \varphi(e)=\varphi(e)$, and by Lemma \ref{lem:properties of M(Gamma)}\ref{item:properties of M(Gamma)(iv)}, $\{\varphi(x),\varphi(y)\}\subseteq \varphi(e)$. Finally, since $\varphi$ is injective and $x\neq y$, we have $\varphi(x)\neq\varphi(y)$ and thus $\varphi(e)=\{\varphi(x),\varphi(y)\}$.
\end{proof}

\begin{thm}\label{thm:graph to mon}
Let $\Gamma=(V,E)$ be a graph. Then $\Aut_\Graphs(\Gamma)\cong\Aut_\Monoids(M(\Gamma))$.
\end{thm}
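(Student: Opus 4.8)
The plan is to show that the functor $M$ itself induces the isomorphism. Since $\Aut_\InjGraphs(\Gamma)=\Aut_\Graphs(\Gamma)$, applying $M$ to automorphisms gives a map
\[
\Phi\colon \Aut_\Graphs(\Gamma)\to \Aut_\Monoids(M(\Gamma)),\qquad f\mapsto \overline{f},
\]
and because $M$ is a functor it preserves identities and composition, so $\Phi$ is a group homomorphism. It then remains to check that $\Phi$ is bijective.

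For injectivity I would use that $V\subseteq M(\Gamma)$ and that, by construction, $\overline{f}$ restricts to $f$ on $V$. Hence if $\overline{f}=\Id$ then $f=\Id$ on $V$, i.e. $f$ is the identity of $\Gamma$, so $\ker\Phi$ is trivial.

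For surjectivity, which is where Lemma \ref{lem:mon to graph} does the real work, I would take an arbitrary $\varphi\in\Aut_\Monoids(M(\Gamma))$ and reconstruct a graph automorphism. By Lemma \ref{lem:mon to graph}\ref{item:mon to graph (a)}, $\varphi$ preserves $\{0\}$, $\{1\}$, $V$ and $E$, so $f:=\varphi|_V$ is a bijection of $V$; by Lemma \ref{lem:mon to graph}\ref{item:mon to graph (b)}, $\{x,y\}\in E$ implies $\{f(x),f(y)\}\in E$, so $f$ is edge-preserving. Applying the same reasoning to $\varphi^{-1}$ shows $f^{-1}$ is edge-preserving as well, whence $f\in\Aut_\Graphs(\Gamma)$. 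Finally I would verify $\Phi(f)=\overline{f}=\varphi$ by comparing the two maps on each piece of $M(\Gamma)=V\sqcup E\sqcup\{0,1\}$: they agree on $V$ by the definition of $f$, on $\{0,1\}$ because both monoid homomorphisms fix $0$ and $1$, and on $E$ because Lemma \ref{lem:mon to graph}\ref{item:mon to graph (b)} gives $\varphi(\{x,y\})=\{f(x),f(y)\}=\overline{f}(\{x,y\})$. Thus $\overline{f}=\varphi$, proving $\Phi$ surjective.

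The main obstacle is the surjectivity step, namely recovering a genuine graph automorphism from an abstract monoid automorphism and confirming that the functor sends it back to exactly the original map. However, Lemma \ref{lem:mon to graph} already packages the essential combinatorial content—that a monoid isomorphism must respect the vertex/edge/incidence decomposition—so the remaining work is essentially bookkeeping across the three parts of $M(\Gamma)$.
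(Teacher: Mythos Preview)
Your proposal is correct and follows essentially the same route as the paper's proof: both use the functor $M$ to obtain the homomorphism $\Phi$, check injectivity via restriction to $V$, and use Lemma~\ref{lem:mon to graph} to recover a graph automorphism $f=\varphi|_V$ with $\Phi(f)=\varphi$. Your version is in fact slightly more explicit, spelling out the piecewise verification of $\overline{f}=\varphi$ on $V\sqcup E\sqcup\{0,1\}$ and invoking $\varphi^{-1}$ to confirm that $f$ is a genuine graph automorphism, whereas the paper leaves these checks implicit.
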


\begin{proof}
Since $M\colon \InjGraphs \to \Monoids$ is a functor, $M$ induces a group homomorphism $\Phi\colon \Aut_\Graphs(\Gamma)=\Aut_\InjGraphs(\Gamma)\to\Aut_\Monoids(M(\Gamma))$. Moreover, notice that $f=M(f)|_V$ and thus, if $M(f)$ is the identity, so is $f$. Hence $\Phi$ is injective. 
Finally, by Lemma \ref{lem:mon to graph}, if $\varphi\in \Aut_\Monoids(M(\Gamma))$, then $\varphi|_V$ defines a graph homomorphism from $\Gamma$ to itself which is an automorphism of $\Gamma$ and we have $\Phi(\varphi|_V)=\varphi$. Hence $\Phi$ is an isomorphism.  
\end{proof}

\subsection{From graphs to partial groups}\label{subsec:graph to part}

Let $\Gamma=(V,E)$ be a graph. Set $\Ec(\Gamma)$ as the subset of $\Wb(V)$ consisting in the empty word together with the words of length 1 and the words of length 2 corresponding to edges in $E$ :
\[\Ec(\Gamma)=\{\emptyset\}\cup \{(x)\mid x\in V\}\cup \{(x,y)\mid \{x,y\}\in E\}\subset \Wb(V).\]
Here we decided to not identify $V$ with the words of length one in $\Wb(V)$ to help the reader when considering words as we will play with elements of $\Wb(V)$ and elements of $\Wb(\Ec(\Gamma))\subseteq \Wb(\Wb(V))$. 

The domain $\Db(\Gamma)$ will be define as the set of words $u=(u_1,u_2,\dots,u_r)\in \Wb(\Ec(\Gamma))$ so that the concatenation $u_1\circ u_2,\circ\cdots\circ u_r\in \Wb(V)$ is a word of the form $(x,x,\dots,x)$ for $x\in V$ (in particular, for all $i\in\{1,2\dots,r\}$, $u_i=(x)$ or $u_i=\emptyset$) or a finite subword of an infinite word of the form 
\[(\underbrace{x,x,\dots,x}_{2k_1\text{ terms}},\underbrace{y,y,\dots,y}_{2r_1\text{ terms}},\underbrace{x,x,\dots,x}_{2k_2\text{ terms}},\underbrace{y,y,\dots,y}_{2r_2\text{ terms}},\dots)\]
with $\{x,y\}\in E$ and for all $j$, $k_j,r_j\geq 1$. For example, if $\{x,y\}\in E$, $(x,x,x,y,y,y,y,y,x,x,y,y,y,y,y,y,x)\in\Db(\Gamma)$.

The product $\Pi\colon\Db(\Gamma)\to \Ec(\Gamma)$ is then given, for $u=(u_1,u_2,\dots,u_r)\in \Db(\Gamma)$, by $\Pi(u)$ as the word obtained from the concatenation $u_1\circ u_2,\circ\cdots\circ u_r$ by removing subwords of the form $(x,x)$ for $x\in V$ until no such reduction is possible (one can check that the resulting word is independent of the reduction process). In particular, since by definition of $\Db(\Gamma)$ $u$ involves at most two vertices, and two vertices of the same edge when it involves exactly two, we have that $\Pi(u)\in\Ec(\Gamma)$. Notice also that the empty word plays here the role of the unit. 


Finally, the inversion $(-)^{-1}$ is defined by sending a word $(x)$ to its symmetric words (i.e. for all $x\in V$, $(x)^{-1}=(x)$ and for all $\{x,y\}\in E$, $(x,y)^{-1}=(y,x)$).

\begin{prop}\label{prop:E(Gamma) is a partial group}
Let $\Gamma=(V,E)$ be a graph. the tuple $\left(\Ec(\Gamma),\Db(\Gamma),\Pi,(-)^{-1}\right)$ constructed above defines a partial group.
Moreover, this construction gives a functor $\Ec\colon \Graphs\to \Part$.
\end{prop}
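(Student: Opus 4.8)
The plan is to verify the partial-group axioms (D1), (D2), (P1), (P2), (P3) for the tuple $\left(\Ec(\Gamma),\Db(\Gamma),\Pi,(-)^{-1}\right)$ and then check functoriality separately. First I would set up convenient language: each element of $\Ec(\Gamma)$ is either $\emptyset$, a length-one word $(x)$, or a length-two word $(x,y)$ with $\{x,y\}\in E$, and each word $u=(u_1,\dots,u_r)\in\Db(\Gamma)$ determines its \emph{concatenation} $c(u)=u_1\circ\cdots\circ u_r\in\Wb(V)$, which by definition is a subword of one of the two allowed patterns (all equal to a single $x$, or an alternating block pattern on an edge $\{x,y\}$). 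The key observation I would isolate at the outset is that the reduction process (deleting adjacent pairs $(x,x)$) applied to $c(u)$ always terminates in a word of length $\le 2$ lying in $\Ec(\Gamma)$; this requires noting that a word on a single vertex $x$ reduces to $\emptyset$ or $(x)$ according to parity, while a subword of the alternating edge pattern reduces to $\emptyset$, $(x)$, $(y)$, $(x,y)$, or $(y,x)$, all of which are in $\Ec(\Gamma)$. Confluence of the reduction (independence from the reduction order) should be remarked as a standard free-product-with-involution style cancellation argument, so that $\Pi$ is well defined.

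Next I would dispatch the domain axioms. Axiom (D1), $\Ec(\Gamma)\subseteq\Db(\Gamma)$, holds because a single letter $(x)$, $\emptyset$, or $(x,y)$ each trivially fits one of the two pattern shapes, so the length-one words in $\Wb(\Ec(\Gamma))$ lie in $\Db(\Gamma)$. Axiom (D2), closure under taking subwords of a concatenation $u\circ v$, is immediate from the definition since $\Db(\Gamma)$ is defined precisely as subwords of the admissible patterns: any contiguous piece of an admissible pattern is again an admissible pattern, so if $u\circ v\in\Db(\Gamma)$ then both $u$ and $v$ are. For the product axioms, (P1) is clear: on a length-one word $(x)\in\Ec(\Gamma)$ the concatenation is already reduced, and on $(x,y)$ likewise, so $\Pi$ restricts to the identity on $\Ec(\Gamma)$. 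The substance is axiom (P2): given $u\circ v\circ w\in\Db(\Gamma)$, I must show $u\circ\Pi(v)\circ w\in\Db(\Gamma)$ and $\Pi(u\circ v\circ w)=\Pi(u\circ\Pi(v)\circ w)$. Here the crucial point is that reducing the middle block $v$ to $\Pi(v)$ first, then concatenating, yields a word whose concatenation is a subword of the same admissible pattern (replacing $v$ by $\Pi(v)$ removes an even number of letters, which preserves both the parity bookkeeping in the single-vertex case and the block structure in the edge case), and that the full reduction of $c(u\circ v\circ w)$ agrees with the reduction of $c(u)\circ c(\Pi(v))\circ c(w)$ by confluence. Finally (P3): for $u\in\Db(\Gamma)$ one checks $(u^{-1},u)\in\Db(\Gamma)$, since $u^{-1}$ has concatenation equal to the reversed-and-inverted pattern of $u$ which is again admissible on the same vertex or edge, and $\Pi(u^{-1}\circ u)=\emptyset=1$ because every letter cancels against its mirror in the reduction.

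For the functoriality claim I would argue that a graph homomorphism $f\colon\Gamma_1\to\Gamma_2$ induces $\Ec(f)\colon\Ec(\Gamma_1)\to\Ec(\Gamma_2)$ sending $\emptyset\mapsto\emptyset$, $(x)\mapsto(f(x))$, and $(x,y)\mapsto(f(x),f(y))$, the last being a well-defined element of $\Ec(\Gamma_2)$ because $f$ sends edges to edges. I would then check (H1), namely $\Wb(\Ec(f))(\Db(\Gamma_1))\subseteq\Db(\Gamma_2)$, by observing that $f$ carries an admissible single-vertex or edge-alternating pattern on $\Gamma_1$ to a word of the same shape on $\Gamma_2$ (a single-vertex pattern maps to a single-vertex pattern; an edge pattern on $\{x,y\}$ maps to an edge pattern on $\{f(x),f(y)\}\in E_2$), and (H2), compatibility with $\Pi$, because the reduction rule deleting $(a,a)$ is preserved under applying $f$ letterwise. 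Composition and identity are then formally respected, giving the functor $\Ec\colon\Graphs\to\Part$.

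The main obstacle I expect is the careful verification of (P2) together with the well-definedness (confluence) of $\Pi$: these are the places where the somewhat intricate two-vertex alternating pattern must be shown to be stable under both local reduction of a middle subword and global reduction, and where one must confirm that no reduction can ever escape $\Ec(\Gamma)$ or produce a word involving a non-edge pair of vertices. The other axioms are essentially bookkeeping, but the parity and block-structure invariants underlying $\Db(\Gamma)$ need to be stated precisely enough that (P2) and (P3) follow cleanly.
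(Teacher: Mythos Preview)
Your proposal is correct and follows essentially the same route as the paper: direct verification of the axioms (D1), (D2), (P1)--(P3) from the shape of the admissible patterns, followed by the observation that a graph homomorphism sends admissible patterns to admissible patterns and commutes with the cancellation rule. The only cosmetic difference is that for (P2) you invoke confluence of the reduction once and for all, whereas the paper phrases it as a case-by-case computation (working out one representative case explicitly); these are equivalent, and your formulation is arguably cleaner.
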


\begin{proof}
By construction of $\Db(\Gamma)$, the conditions \ref{cond:D1} and \ref{cond:D2} of Definition \ref{def:PG} are satisfied. By construction of $\Pi$, the condition \ref{cond:P1} is satisfy. For condition \ref{cond:P2} a case-by-case study shows that it is also satisfied. For example, let $u,v,w\in \Db(\Gamma)$ and assume that $u\circ v \circ w\in \Db(\Gamma)$ is a word, involving two vertices $x,y\in V$. Assume also that $u,v$ and $w$ are non empty and that $v$ is of the form 
\[ (x,\underbrace{x,x,\dots,x}_{2k_1\text{ terms}},\underbrace{y,y,\dots,y}_{2r_1\text{ terms}},\underbrace{x,x,\dots,x}_{2k_2\text{ terms}},\underbrace{y,y,\dots,y}_{2r_2\text{ terms}},\dots, \underbrace{x,x,\dots,x}_{2k_n\text{ terms}},\underbrace{y,y,\dots,y}_{2r_n\text{ terms}})\]
with $n\geq 1$ and for all $j\in\{1,2,\dots,n\}$,  $k_j,r_j\geq 1$.
Then, since $u$ is not empty, by definition of $\Db(\Gamma)$ we can write $u=u'\circ (x)$ with $u'\in \Db(\Gamma)$. By definition of $\Pi$, we get that 
\begin{align*}
\Pi(u\circ v\circ w)&=\Pi(u'\circ (x)\circ v \circ w)\\
&=\Pi(u'\circ (\underbrace{x,x,\dots,x}_{2k_1+2\text{ terms}},\underbrace{y,y,\dots,y}_{2r_1\text{ terms}},\dots, \underbrace{x,x,\dots,x}_{2k_n\text{ terms}},\underbrace{y,y,\dots,y}_{2r_n\text{ terms}})\circ w)\\
&=\Pi(u'\circ w).
\end{align*}
We also have $\Pi(v)=(x)$ and thus $\Pi(u\circ\Pi(v)\circ w)=\Pi(u'\circ(x)\circ(x)\circ w)=\Pi(u\circ (x,x)\circ w)=\Pi(u'\circ w)$. In particular, $\Pi(u\circ v\circ w)=\Pi(u\circ\Pi(v)\circ w)$. The other cases can be managed in a similar fashion. 
Finally, the definition of the inversion, $\Db(\Gamma)$ and $\Pi$ ensure that condition \ref{cond:P3} is satisfied.

For the functoriality, notice that, if $\Gamma_1=(V_1,E_1)$ and  $\Gamma_2=(V_2,E_2)$ are two graphs and $f\colon \Gamma_1\to\Gamma_2$ is a graph homomorphism, the induced map $\Wb(f)\colon\Wb(V_1)\to\Wb(V_2)$ sends $\Ec(\Gamma_1)$ to $\Ec(\Gamma_2)$. This restriction gives a  partial groups homomorphism $\Ec(f)\colon\Ec(\Gamma_1)\to\Ec(\Gamma_2)$ and all this is compatible with composition of graph homomorphisms. 
\end{proof}

The following Lemma is a direct consequence of the partial group structure on $\Ec(\Gamma)$ given above.

\begin{lem}\label{lem:properties of E(Gamma)}
Let $\Gamma=(V,E)$ be a graph.
\begin{enumerate}[label=(\roman*)]
\item\label{item:properties of E(Gamma)(i)} For all $u\in \Ec(\Gamma)$, $(u,u)\in \Db(\Gamma)$ if and only if there exists $x\in V$ such that $u=(x)$.
\item\label{item:properties of E(Gamma)(ii)} For all $x,y\in V$ with $x\neq y$, $((x),(y))\in \Db(\Gamma)$ if and only if $\{x,y\}\in E$. Moreover, if these properties are satisfied, then we have $(x,y)=\Pi((x),(y))$.
\item\label{item:properties of E(Gamma)(iii)} If $\Gamma$ is finite, then $\Ec(\Gamma)$ is also finite and $|\Ec(\Gamma)|=1+|V|+2|E|$.
\end{enumerate}
\end{lem}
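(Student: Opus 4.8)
The plan is to prove each of the three items directly from the definitions of $\Ec(\Gamma)$, $\Db(\Gamma)$, and $\Pi$, treating them as short unwindings of the construction rather than deep arguments.

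For item \ref{item:properties of E(Gamma)(i)}, I would argue both directions. If $u=(x)$ for some $x\in V$, then $(u,u)=((x),(x))$ concatenates to $(x,x)$, which is of the form $(x,x,\dots,x)$, so it lies in $\Db(\Gamma)$ by the first clause in the definition of the domain. Conversely, suppose $(u,u)\in\Db(\Gamma)$ with $u\in\Ec(\Gamma)$. The empty word is excluded since $(\emptyset,\emptyset)$ would need $u$ nonempty to be interesting, but more to the point I should check the three possible shapes of $u\in\Ec(\Gamma)$: if $u=\emptyset$ then $(u,u)=(\emptyset,\emptyset)$ and the concatenation is empty, which does correspond to $x$ repeated zero times, so I must phrase the statement carefully — the cleanest reading is that $(u,u)\in\Db(\Gamma)$ forces $u$ to be a length-one word $(x)$, and I would rule out $u=(x,y)$ with $\{x,y\}\in E$ by noting that $(u,u)$ concatenates to $(x,y,x,y)$, which is neither of the form $(x,x,\dots,x)$ nor a subword of the alternating $2k$-block/$2r$-block infinite word (the blocks there have even length at least $2$, so a single $x$ followed by a single $y$ cannot occur). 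I expect the subtlety with the empty word to be the only delicate point in the whole lemma, and I would resolve it by reading ``there exists $x\in V$ with $u=(x)$'' as the intended conclusion and confirming $u=\emptyset$ does not satisfy the hypothesis once one insists the paired word genuinely lie in the domain as a two-letter word of $\Wb(\Ec(\Gamma))$.

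For item \ref{item:properties of E(Gamma)(ii)}, again both directions. If $\{x,y\}\in E$, then $(x,y)\in\Ec(\Gamma)$ and the concatenation of $((x),(y))$ is $(x,y)$, a subword of the alternating infinite word associated to the edge $\{x,y\}$, hence $((x),(y))\in\Db(\Gamma)$; moreover applying $\Pi$ performs no $(x,x)$-reduction since $x\neq y$, so $\Pi((x),(y))=(x,y)$. Conversely, if $((x),(y))\in\Db(\Gamma)$ with $x\neq y$, the concatenation $(x,y)$ involves two distinct vertices, so by the definition of $\Db(\Gamma)$ it must be a subword of the alternating word for some edge, forcing $\{x,y\}\in E$.

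For item \ref{item:properties of E(Gamma)(iii)}, I would simply count: $\Ec(\Gamma)$ is the disjoint union of the single empty word, the $|V|$ length-one words, and the length-two words corresponding to edges. Each edge $\{x,y\}\in E$ contributes exactly the two ordered words $(x,y)$ and $(y,x)$, giving $2|E|$ such words, so $|\Ec(\Gamma)|=1+|V|+2|E|$ when $\Gamma$ is finite, and finiteness is immediate. There is no real obstacle in this lemma; the entire task is bookkeeping against the definitions, and the only place warranting care is the empty-word boundary case in \ref{item:properties of E(Gamma)(i)}.
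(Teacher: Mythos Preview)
Your proposal is correct and matches the paper's approach: the paper gives no explicit proof at all, merely stating that the lemma ``is a direct consequence of the partial group structure on $\Ec(\Gamma)$ given above,'' and your argument is precisely that direct unwinding of the definitions. Your observation about the empty-word boundary case in \ref{item:properties of E(Gamma)(i)} is apt --- indeed $(\emptyset,\emptyset)\in\Db(\Gamma)$, so the statement is literally correct only for $u\neq\emptyset$; the paper implicitly acknowledges this later when it applies the lemma only to ``non trivial elements'' in the proof of Lemma~\ref{lem:part to graph}.
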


We now look more closely at what happens on morphisms. 

\begin{lem}\label{lem:part to graph}
Let $\Gamma_1=(V_1,E_1)$ and $\Gamma_2=(V_2,E_2)$ be two finite graphs and $\varphi\colon\Mc(\Gamma_1)\to \Mc(\Gamma_2)$ a partial group homomorphism.
\begin{enumerate}[label=(\alph*)]
\item\label{item:part to graph(a)} $\varphi$ is totally determined by its image on $\{(x)\mid x\in V_1\}\subseteq \Ec(\Gamma_1)$.
\item\label{item:part to graph(b)} $\varphi\left(\{(x)\mid x\in V_1\}\right)\subseteq\{(x)\mid x\in V_2\}$.
\item\label{item:part to graph(c)} Assume $\varphi$ is injective and let $x,y\in V_1$ with $x\neq y$. If $\{x,y\}\in E_1$, then $\{\overline{\varphi}(x),\overline{\varphi}(y)\}\in E_2$ and $\varphi((x,y))=(\overline{\varphi}(x),\overline{\varphi}(y))$, where $\overline{\varphi}\colon V_1\to V_2$ is the application such that, for all $x\in V_1$, $\varphi((x))=((\overline{\varphi}(x))$ ($\overline{\varphi}$ exists by \ref{item:part to graph(b)}).
\end{enumerate}
\end{lem}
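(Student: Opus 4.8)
The plan is to reduce each of the three claims to the intrinsic characterizations of length-one and length-two words recorded in Lemma \ref{lem:properties of E(Gamma)}, combined with the two defining axioms (H1) and (H2) of a partial group homomorphism. The guiding idea is that a homomorphism must respect any property of an element that can be phrased purely in terms of membership in the domain and values of the product, and both ``being a vertex'' (property (i)) and ``being an edge'' (property (ii)) are of this form.

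First I would record the preliminary fact that $\varphi(\emptyset)=\emptyset$: since $\emptyset\in\Db(\Gamma_1)$ with $\Pi(\emptyset)=1=\emptyset$, axiom (H2) gives $\varphi(\emptyset)=\varphi(\Pi(\emptyset))=\Pi(\Wb(\varphi)(\emptyset))=\Pi(\emptyset)=\emptyset$. Next, for part \ref{item:part to graph(b)}, I would invoke Lemma \ref{lem:properties of E(Gamma)}\ref{item:properties of E(Gamma)(i)}, which says that the length-one words are exactly the elements $u$ with $(u,u)\in\Db(\Gamma_1)$. If $u=(x)$, then $(u,u)\in\Db(\Gamma_1)$, so by (H1) we have $(\varphi(u),\varphi(u))=\Wb(\varphi)((u,u))\in\Db(\Gamma_2)$; applying the same characterization in $\Gamma_2$ forces $\varphi(u)$ to be a length-one word. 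This proves \ref{item:part to graph(b)} and legitimizes the definition of $\overline{\varphi}\colon V_1\to V_2$ by $\varphi((x))=(\overline{\varphi}(x))$.

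For part \ref{item:part to graph(a)}, the value $\varphi(\emptyset)=\emptyset$ is already pinned down and \ref{item:part to graph(b)} controls the length-one words, so only the length-two words remain. For $\{x,y\}\in E_1$, Lemma \ref{lem:properties of E(Gamma)}\ref{item:properties of E(Gamma)(ii)} gives $((x),(y))\in\Db(\Gamma_1)$ and $(x,y)=\Pi((x),(y))$; then (H2) yields
\[
\varphi((x,y))=\varphi\bigl(\Pi((x),(y))\bigr)=\Pi\bigl(\varphi((x)),\varphi((y))\bigr),
\]
an expression depending only on the restriction of $\varphi$ to $\{(x)\mid x\in V_1\}$. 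Hence $\varphi$ is determined by that restriction, giving \ref{item:part to graph(a)}.

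Finally, for part \ref{item:part to graph(c)}, I would assume $\varphi$ injective and take $x\neq y$ with $\{x,y\}\in E_1$. Injectivity applied to $(x)\neq(y)$ gives $\overline{\varphi}(x)\neq\overline{\varphi}(y)$, and (H1) gives $((\overline{\varphi}(x)),(\overline{\varphi}(y)))\in\Db(\Gamma_2)$. The key step, and the only one requiring any care, is then to apply Lemma \ref{lem:properties of E(Gamma)}\ref{item:properties of E(Gamma)(ii)} in $\Gamma_2$ \emph{in its two-distinct-vertices form}: since $\overline{\varphi}(x)\neq\overline{\varphi}(y)$, membership of $((\overline{\varphi}(x)),(\overline{\varphi}(y)))$ in $\Db(\Gamma_2)$ forces $\{\overline{\varphi}(x),\overline{\varphi}(y)\}\in E_2$ and $(\overline{\varphi}(x),\overline{\varphi}(y))=\Pi((\overline{\varphi}(x)),(\overline{\varphi}(y)))$. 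Substituting this into the formula from \ref{item:part to graph(a)} yields $\varphi((x,y))=(\overline{\varphi}(x),\overline{\varphi}(y))$, as desired. I expect no genuine obstacle here: the whole argument is a direct transport of the domain/product conditions through the homomorphism, with injectivity needed solely to guarantee the distinctness $\overline{\varphi}(x)\neq\overline{\varphi}(y)$ that unlocks the edge criterion in $\Gamma_2$.
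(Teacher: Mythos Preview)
Your proof is correct and follows essentially the same route as the paper's: both reduce (b) to the self-multiplication characterization of Lemma~\ref{lem:properties of E(Gamma)}\ref{item:properties of E(Gamma)(i)}, reduce (a) and (c) to the edge characterization of Lemma~\ref{lem:properties of E(Gamma)}\ref{item:properties of E(Gamma)(ii)}, and use injectivity only to obtain $\overline{\varphi}(x)\neq\overline{\varphi}(y)$. Your version is a bit more explicit---you spell out $\varphi(\emptyset)=\emptyset$ and invoke (H1)/(H2) by name---and you prove (b) before (a), which is arguably the more natural order since your argument for (a) on length-two words already uses (b); otherwise the two proofs coincide.
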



\begin{proof} 
Part \ref{item:part to graph(a)} follows from Lemma \ref{lem:properties of E(Gamma)}\ref{item:properties of E(Gamma)(ii)} since $\varphi$ is a partial group homomorphism. 

By Lemma \ref{lem:properties of E(Gamma)}\ref{item:properties of E(Gamma)(i)}, for all $i\in\{1,2\}$, the subset $\{(x)\mid v\in V_i\}\subseteq\Ec(\Gamma_i)$ is exactly the set of all the non trivial elements of $\Ec(\Gamma_i)$ that can be multiplied with themselves. Therefore $\varphi(\{(x)\mid x\in V_1\})\subseteq \{(x)\mid x\in V_2\}$ and \ref{item:part to graph(b)} follows.

Finally, assume that $\varphi$ is injective and let $x,y\in V_1$. Assume moreover that $\{x,y\}\in E_1$. Then, $(x,y)\in\Ec(\Gamma_1)$ and by Lemma \ref{lem:properties of E(Gamma)}\ref{item:properties of E(Gamma)(ii)} we have $((x),(y))\in\Db(\Gamma_1)$ and $(x,y)=\Pi((x),(y))$. Therefore $((\overline{\varphi}(x)),(\overline{\varphi}(y)))=(\varphi(x),\varphi(y))\in \Db(\Gamma_2)$. 
Now since $\varphi$ is injective, $\overline{\varphi}(x)\neq\overline{\varphi}(y)$ and, again by Lemma \ref{lem:properties of E(Gamma)}\ref{item:properties of E(Gamma)(ii)}, $\{\overline{\varphi}(x),\overline{\varphi}(y)\}\in\Ec(\Gamma_2)$ and $\Pi((\overline{\varphi}(x)),(\overline{\varphi}(y)))=(\overline{\varphi}(x),\overline{\varphi}(y))$. Finally, since $\varphi$ is a partial group homomorphism, $\varphi((x,y))=\varphi(\Pi((x),(y)))=\Pi(\varphi((x)),\varphi((y)))=\Pi((\overline{\varphi}(x)),(\overline{\varphi}(y)))=(\overline{\varphi}(x),\overline{\varphi}(y))$.   
\end{proof}



\begin{thm}\label{thm:graph to part}
Let $\Gamma=(V,E)$ be a graph.
Then $\Aut_\Graphs(\Gamma)=\Aut_\Part\left(\Ec(\Gamma)\right)$.
\end{thm}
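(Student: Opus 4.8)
The plan is to establish the isomorphism $\Aut_\Graphs(\Gamma)\cong\Aut_\Part(\Ec(\Gamma))$ by exhibiting mutually inverse group homomorphisms, using the functor $\Ec$ from Proposition \ref{prop:E(Gamma) is a partial group} in one direction and the vertex-restriction furnished by Lemma \ref{lem:part to graph} in the other. First I would note that, since $\Ec\colon\Graphs\to\Part$ is a functor, it sends a graph automorphism $f$ of $\Gamma$ to a partial group automorphism $\Ec(f)$ of $\Ec(\Gamma)$, and this assignment respects composition and identities; hence it induces a group homomorphism $\Phi\colon\Aut_\Graphs(\Gamma)\to\Aut_\Part(\Ec(\Gamma))$.

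Next I would check that $\Phi$ is injective. The key observation is that $f$ can be recovered from $\Ec(f)$ by restricting to the length-one words: for every $x\in V$ we have $\Ec(f)((x))=(f(x))$, so if $\Ec(f)$ is the identity on $\Ec(\Gamma)$ then $f$ is the identity on $V$, and thus on $\Gamma$. This mirrors exactly the argument used for monoids in the proof of Theorem \ref{thm:graph to mon}.

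For surjectivity I would start from an arbitrary $\varphi\in\Aut_\Part(\Ec(\Gamma))$ and produce a graph automorphism $\overline{\varphi}$ of $\Gamma$ with $\Phi(\overline{\varphi})=\varphi$. Since $\varphi$ is an isomorphism, both $\varphi$ and $\varphi^{-1}$ are injective partial group homomorphisms, so Lemma \ref{lem:part to graph}\ref{item:part to graph(b)} applies to give a map $\overline{\varphi}\colon V\to V$ defined by $\varphi((x))=(\overline{\varphi}(x))$; applying the same to $\varphi^{-1}$ shows $\overline{\varphi}$ is a bijection of $V$. Lemma \ref{lem:part to graph}\ref{item:part to graph(c)} then guarantees that $\overline{\varphi}$ preserves edges in both directions, so $\overline{\varphi}\in\Aut_\Graphs(\Gamma)$. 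Finally I would verify that $\Ec(\overline{\varphi})=\varphi$: by Lemma \ref{lem:part to graph}\ref{item:part to graph(a)} any partial group homomorphism out of $\Ec(\Gamma)$ is determined by its values on the length-one words, and both $\Ec(\overline{\varphi})$ and $\varphi$ send $(x)\mapsto(\overline{\varphi}(x))$, hence they coincide. This gives $\Phi(\overline{\varphi})=\varphi$ and completes the proof.

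The step I expect to require the most care is the bijectivity of $\overline{\varphi}$ together with the edge-preservation in both directions, since Lemma \ref{lem:part to graph}\ref{item:part to graph(c)} is stated only for injective homomorphisms and only gives one implication; to obtain a genuine graph automorphism I must feed both $\varphi$ and its inverse $\varphi^{-1}$ into the lemma and check that $\overline{\varphi^{-1}}=(\overline{\varphi})^{-1}$. The finiteness hypothesis on $\Gamma$ present in Lemma \ref{lem:part to graph} should be noted, though the determination-by-length-one-words argument is essentially formal.
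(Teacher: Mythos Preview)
Your proposal is correct and follows essentially the same route as the paper: use functoriality of $\Ec$ to get $\Phi$, recover $f$ from $\Ec(f)$ on length-one words for injectivity, and invoke Lemma~\ref{lem:part to graph}\ref{item:part to graph(b)},\ref{item:part to graph(c)} to build $\overline{\varphi}$ and show $\Phi(\overline{\varphi})=\varphi$ for surjectivity. Your treatment is in fact slightly more careful than the paper's in two respects: you explicitly feed $\varphi^{-1}$ into the lemma to obtain bijectivity and two-sided edge preservation, and you correctly flag that Lemma~\ref{lem:part to graph} carries a finiteness hypothesis that the theorem statement does not (the argument goes through verbatim without finiteness, so this is a cosmetic issue in the paper rather than a gap in your proof).
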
 

\begin{proof} 
By functoriality, $\Ec\colon \Graphs\to \Part$ induces a group homomorphism $\Phi\colon \Aut_\Graphs(\Gamma)\to\Aut_\Part\left(\Ec(\Gamma)\right)$.
 
 Now let $f$ be a automorphism of $\Gamma$ and assume that $\Phi(f)=\Id$. In particular, by Lemma \ref{lem:part to graph}\ref{item:part to graph(b)}, for every $x\in V$, $f(x)=\overline{\Phi(f)}(x)=x$. Thus $f=\Id_\Gamma$. 
 
 Finally, let $\varphi\in\Aut_\Part(\Ec(\Gamma))$ and set $\Ec_V=\{(x)\mid x\in V\}$. By Lemma \ref{lem:part to graph}\ref{item:part to graph(b)} and since $\varphi$ is an automorphism, for all $x\in V$, $\varphi((x))\in \Ec_V$ and $\varphi$ induces a bijection $\overline{\varphi}\colon V\to V$. Then, by Lemma~\ref{lem:part to graph}\ref{item:part to graph(c)}, $\overline{\varphi}\in\Aut_\Graphs(\Gamma)$. Finally, $\Phi(\overline{\varphi})=\varphi$ and so $\Phi$ is surjective.
\end{proof}

\subsection{From graph to posets}

Finally, we construct a functor from $\Graphs$ to $\Posets$. What follows is not really knew and was already highlighted in the introduction of \cite{Ba}.

Let $\Gamma=(V,E)$ be a graph, we consider the poset $P(\Gamma)=\{\{x\}\mid x\in V\}\cup E$, i.e. $P(G)$ is a subset of $\Pc(V)$ the set of subsets of $V$ consisting in all the singleton in $V$ and the elements of $E$ (which are subset of $V$ of size 2),  order by inclusion. This poset is sometimes called the face poset of $\Gamma$ and it defines a functor $P\colon \Graphs\to \Posets$.

The following Lemma is a direct consequence of the construction.

\begin{lem}\label{lem:properties of P(Gamma)}
Let $\Gamma=(V,E)$ be a graph. We will denote by $\leq$ the partial order on $P(\Gamma)$.
\begin{enumerate}[label=(\roman*)]
\item\label{item:properties of P(Gamma)(i)} The set of maximal elements of $P(\Gamma)$ is $E$ together with the isolated vertices of $\Gamma$ and the set of minimal element of $P(\Gamma)$ is $V$.
\item\label{item:properties of P(Gamma)(ii)} For all $u,v\in P(\Gamma)$ with $u\neq v$, $u\leq v$ if and only if $v\in E$ and there exists $x\in V$ such that $u=\{x\}$ and $x$ is one of the extremities of $v$.
\item\label{item:properties of P(Gamma)(iii)} If $\Gamma$ is a finite, then $P(\Gamma)$ is also finite and $|P(\Gamma)|=|V|+|E|$.
\end{enumerate}
\end{lem}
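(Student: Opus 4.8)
The plan is to establish the three items in the order (ii), then (i), then (iii), because the order-relation characterisation in (ii) immediately drives the analysis of maximal and minimal elements in (i), while (iii) is an essentially independent counting statement. The whole argument is a direct unwinding of the definition of the face poset, so I expect no serious obstacle; the only step requiring the slightest care is the cardinality case analysis underlying (ii).

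For item (ii) I would argue directly from the fact that the order on $P(\Gamma)$ is inclusion and that every element of $P(\Gamma)$ is a subset of $V$ of cardinality either $1$ (a singleton $\{x\}$ with $x\in V$) or $2$ (an edge in $E$). If $u,v\in P(\Gamma)$ with $u\neq v$ and $u\leq v$, then $u\subsetneq v$ forces $|u|<|v|$, hence $|u|=1$ and $|v|=2$; thus $u=\{x\}$ for some $x\in V$ and $v\in E$, and $\{x\}\subseteq v$ says exactly that $x$ is an extremity of $v$. The converse is immediate, since $\{x\}\subseteq v$ whenever $x$ is an extremity of $v$.

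For item (i) I would simply read off the maximal and minimal elements from (ii). An edge $e\in E$ can lie strictly below some $v\in P(\Gamma)$ only if $e$ were a singleton, which it is not, so every edge is maximal; a singleton $\{x\}$ lies strictly below some $v$ exactly when $x$ is an extremity of an edge, that is, exactly when $x$ is \emph{not} isolated, so the maximal singletons are precisely the isolated vertices. Dually, nothing lies strictly below a singleton (a smaller element of $P(\Gamma)$ would need cardinality $0$), so all singletons are minimal, whereas each edge strictly dominates the singletons of its two extremities and hence is never minimal. This yields exactly the stated descriptions: the maximal set is $E$ together with the isolated vertices, and the minimal set is $V$.

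Finally, for item (iii) I would count: when $\Gamma$ is finite the decomposition $P(\Gamma)=\{\{x\}\mid x\in V\}\sqcup E$ is a genuine disjoint union, since no size-$1$ subset of $V$ can equal a size-$2$ subset, and the map $x\mapsto\{x\}$ is a bijection from $V$ onto $\{\{x\}\mid x\in V\}$. Hence $P(\Gamma)$ is finite with $|P(\Gamma)|=|V|+|E|$, as claimed. As noted, the only place that even merits explicit verification is the cardinality dichotomy in (ii), from which items (i) and (iii) follow mechanically.
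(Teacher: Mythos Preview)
Your proof is correct and is exactly the direct verification from the definition of $P(\Gamma)$ that the paper has in mind; in fact the paper does not spell out a proof at all, merely noting that the lemma ``is a direct consequence of the construction.'' Your ordering (ii) $\Rightarrow$ (i), then (iii), and the cardinality dichotomy argument are the natural way to make that remark precise.
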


Let us now look at what happens on isomorphisms.

\begin{lem}\label{lem:poset to graph}
Let $\Gamma_1=(V_1,E_1)$ and $\Gamma_2=(V_2,E_2)$ be two finite graphs and $\varphi\colon P(\Gamma_1)\to P(\Gamma_2)$ a poset homomorphism and assume that $\varphi$ is an isomorphism.
\begin{enumerate}[label=(\alph*)]
\item\label{item:poset to graph(a)} $\varphi(\{\{x\}\mid x\in V_1\})= \{\{x\}\mid x\in V_2\}$ and $\varphi(E_1)= E_2$.
\item\label{item:poset to graph(b)} Let $x,y\in V_1$ with $x\neq y$. If $\{x,y\}\in E_1$, then $\{\overline{\varphi}(x),\overline{\varphi}(y)\}\in E_2$ and $\varphi(\{x,y\})=\{\overline{\varphi}(x),\overline{\varphi}(y)\}$, where $\overline{\varphi}\colon V_1\to V_2$ is the application such that, for all $x\in V_1$, $\varphi(\{x\})=\{\overline{\varphi}(x)\}$ ($\overline{\varphi}$ exists by \ref{item:poset to graph(a)}). 
\end{enumerate}
\end{lem}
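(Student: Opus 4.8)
The plan is to read everything off from the order-theoretic description recorded in Lemma \ref{lem:properties of P(Gamma)}, using only the fact that a poset isomorphism is a bijection preserving the strict order in both directions (i.e. $u<v\iff\varphi(u)<\varphi(v)$).

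For part \ref{item:poset to graph(a)}, I would first observe that the singletons are precisely the minimal elements of $P(\Gamma_i)$. Indeed, by Lemma \ref{lem:properties of P(Gamma)}\ref{item:properties of P(Gamma)(ii)} nothing lies strictly below a singleton, since a strict lower relation $u<v$ forces $v\in E_i$; while every edge has the two singletons of its extremities strictly below it. This matches the description of the minimal elements in Lemma \ref{lem:properties of P(Gamma)}\ref{item:properties of P(Gamma)(i)}. As an order isomorphism sends minimal elements to minimal elements, $\varphi$ restricts to a bijection $\{\{x\}\mid x\in V_1\}\to\{\{x\}\mid x\in V_2\}$. Because $\varphi$ is a bijection on the whole poset and the edges form the complement of the singletons in each $P(\Gamma_i)$, it follows that $\varphi(E_1)=E_2$. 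This also legitimates the map $\overline{\varphi}\colon V_1\to V_2$ defined by $\varphi(\{x\})=\{\overline{\varphi}(x)\}$, which is then a bijection.

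For part \ref{item:poset to graph(b)}, fix an edge $e=\{x,y\}\in E_1$ with $x\neq y$. By Lemma \ref{lem:properties of P(Gamma)}\ref{item:properties of P(Gamma)(ii)} the elements strictly below $e$ in $P(\Gamma_1)$ are exactly $\{x\}$ and $\{y\}$. Applying $\varphi$, the elements strictly below $\varphi(e)$ in $P(\Gamma_2)$ are exactly $\varphi(\{x\})=\{\overline{\varphi}(x)\}$ and $\varphi(\{y\})=\{\overline{\varphi}(y)\}$. Now $\varphi(e)\in E_2$ by part \ref{item:poset to graph(a)}, so reading Lemma \ref{lem:properties of P(Gamma)}\ref{item:properties of P(Gamma)(ii)} backwards in $\Gamma_2$ shows that the elements strictly below $\varphi(e)$ are the singletons of its two extremities; comparing the two descriptions forces the extremities of $\varphi(e)$ to be $\overline{\varphi}(x)$ and $\overline{\varphi}(y)$. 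Since $\varphi$ is injective and $x\neq y$ we have $\overline{\varphi}(x)\neq\overline{\varphi}(y)$, whence $\varphi(\{x,y\})=\{\overline{\varphi}(x),\overline{\varphi}(y)\}$ and in particular $\{\overline{\varphi}(x),\overline{\varphi}(y)\}\in E_2$.

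The argument is essentially bookkeeping once Lemma \ref{lem:properties of P(Gamma)} is in hand; the only point that requires care is that one must invoke the isomorphism property in both directions — preservation of strict order by $\varphi$ and by $\varphi^{-1}$ — in order to identify ``the set of elements strictly below $e$'' with ``the set of elements strictly below $\varphi(e)$''. This is precisely the step that would break down for a mere poset homomorphism, which is why the hypothesis that $\varphi$ be an isomorphism is indispensable (and, accordingly, only the isomorphism case is asserted).
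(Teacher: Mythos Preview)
Your proof is correct and follows essentially the same route as the paper: identify the singletons as the minimal elements via Lemma~\ref{lem:properties of P(Gamma)}, use that an order isomorphism preserves minimal elements to get part~\ref{item:poset to graph(a)} (with $\varphi(E_1)=E_2$ by complement), and then for part~\ref{item:poset to graph(b)} compare the set of elements strictly below $e$ with the set strictly below $\varphi(e)$, using injectivity to conclude the two extremities are distinct. Your explicit remark that both $\varphi$ and $\varphi^{-1}$ must preserve strict order is a welcome clarification of a step the paper leaves implicit.
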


\begin{proof}
Since $\varphi$ is an isomorphism, then it sends the minimal, resp. maximal, elements of $P(\Gamma_1)$ to the the minimal, resp. maximal, elements of $P(\Gamma_2)$. Therefore, part \ref{item:poset to graph(a)} is consequence of Lemma \ref{lem:properties of P(Gamma)}\ref{item:properties of P(Gamma)(i)}. 

Now, let  $x,y\in V_1$ with $x\neq y$. If $\{x,y\}\in E_1$, by \ref{item:poset to graph(a)}, $\varphi(\{x,y\})\in E_2$. So there exists $a,b\in V_2$ such that $\{a,b\}\in E_2$ and $\varphi(\{x,y\})=\{a,b\}$. Since $\{x\}\leq \{x,y\}$ and $\{y\}\leq \{x,y\}$ and $\varphi$ is a poset homomorphism, $\varphi(\{x\})=\{\overline{\varphi}(x)\}\subset \{a,b\}$ and $\varphi(\{y\})=\{\overline{\varphi}(y)\}\subset \{a,b\}$. in particular, by \ref{lem:properties of P(Gamma)}\ref{item:properties of P(Gamma)(ii)}, $\overline{\varphi}(x)\in\{a,b\}$ and $\overline{\varphi}(y)\in\{a,b\}$. As $\varphi$ is injective and $x\neq y$, we have $\{\overline{\varphi}(x),\overline{\varphi}(y)\}=\{a,b\}$ and part \ref{item:poset to graph(b)} follows.
\end{proof}

\begin{thm}\label{thm:graph to poset}
Let $\Gamma=(V,E)$ be a graph.
Then $\Aut_\Graphs(\Gamma)=\Aut_\Posets\left(P(\Gamma)\right)$.
\end{thm}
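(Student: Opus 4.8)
The plan is to follow exactly the template used for Theorems \ref{thm:graph to mon} and \ref{thm:graph to part}, namely to show that the group homomorphism induced by the functor $P$ is a bijection. Since $P\colon \Graphs\to\Posets$ is a functor, it induces a group homomorphism $\Phi\colon\Aut_\Graphs(\Gamma)\to\Aut_\Posets(P(\Gamma))$ sending an automorphism $f$ to $P(f)$, which acts on singletons by $\{x\}\mapsto\{f(x)\}$ and on edges by $\{x,y\}\mapsto\{f(x),f(y)\}$.

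For injectivity, I would take $f\in\Aut_\Graphs(\Gamma)$ with $\Phi(f)=\Id$. Then for every vertex $x\in V$ we have $\{f(x)\}=P(f)(\{x\})=\{x\}$, hence $f(x)=x$; since an automorphism of $\Gamma$ is determined by its values on $V$, this forces $f=\Id_\Gamma$, so $\Phi$ is injective.

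For surjectivity, I would start from $\varphi\in\Aut_\Posets(P(\Gamma))$. By Lemma \ref{lem:poset to graph}\ref{item:poset to graph(a)}, $\varphi$ maps the set of singletons bijectively to itself, so it yields a bijection $\overline{\varphi}\colon V\to V$ with $\varphi(\{x\})=\{\overline{\varphi}(x)\}$. Lemma \ref{lem:poset to graph}\ref{item:poset to graph(b)} then shows that $\overline{\varphi}$ sends edges to edges; applying the same lemma to $\varphi^{-1}$ (which is again a poset isomorphism, with associated vertex bijection $\overline{\varphi}^{-1}$) shows that $\overline{\varphi}^{-1}$ also sends edges to edges, so $\overline{\varphi}\in\Aut_\Graphs(\Gamma)$. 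It then remains to check $\Phi(\overline{\varphi})=\varphi$: the two poset automorphisms agree on singletons by the very definition of $\overline{\varphi}$, and on each edge $\{x,y\}$ Lemma \ref{lem:poset to graph}\ref{item:poset to graph(b)} gives $\varphi(\{x,y\})=\{\overline{\varphi}(x),\overline{\varphi}(y)\}=P(\overline{\varphi})(\{x,y\})$. Since by Lemma \ref{lem:properties of P(Gamma)}\ref{item:properties of P(Gamma)(i)} every element of $P(\Gamma)$ is either a singleton or an edge, the two maps coincide, so $\Phi$ is onto.

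The only genuinely delicate point is ensuring that the vertex bijection $\overline{\varphi}$ is a graph \emph{automorphism} rather than merely an edge-preserving bijection: this requires the reverse implication (edges pull back to edges), obtained by invoking Lemma \ref{lem:poset to graph}\ref{item:poset to graph(b)} for $\varphi^{-1}$ and observing that its associated vertex map is $\overline{\varphi}^{-1}$. Everything else is a routine transcription of the monoid and partial-group arguments, with the partition of $P(\Gamma)$ into singletons and edges (Lemma \ref{lem:properties of P(Gamma)}) playing the structural role that the idempotent and square conditions played there.
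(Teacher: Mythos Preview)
Your proof is correct and follows essentially the same route as the paper's own argument: functoriality gives $\Phi$, injectivity comes from reading off $f$ on singletons, and surjectivity uses Lemma~\ref{lem:poset to graph} to produce and verify the preimage $\overline{\varphi}$. Your extra step of applying Lemma~\ref{lem:poset to graph}\ref{item:poset to graph(b)} to $\varphi^{-1}$ to confirm that $\overline{\varphi}$ is a genuine graph automorphism (and your explicit check that $\Phi(\overline{\varphi})$ and $\varphi$ agree on both singletons and edges) merely spell out details the paper leaves implicit.
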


\begin{proof}
By functoriality, the functor $P\colon \Graphs\to \Posets$ induces a group homomorphism $\Phi\colon \Aut_\Graphs(\Gamma)\to\Aut_\Posets\left(P(\Gamma)\right)$.
 
Now, let $f$ be a automorphism of $\Gamma$ and assume that $\Phi(f)=\Id$. By Lemma~\ref{lem:poset to graph}\ref{item:poset to graph(a)}, $f=\overline{\Phi(f)}=\Id$ and so $\Phi$ is injective.  
 
 Finally, let $\varphi\in\Aut_\Posets(\Ec(\Gamma))$ and set $P_V=\{(x)\mid x\in V\}$. By Lemma~\ref{lem:poset to graph}\ref{item:poset to graph(a)} f induces a bijection $\overline{\varphi}\colon V\to V$ and, by Lemma~\ref{lem:poset to graph}\ref{item:poset to graph(b)}, $\overline{\varphi}\in\Aut_\Graphs(\Gamma)$. Moreover, $\Phi(\overline{\varphi})=\varphi$ and so $\Phi$ is surjective.
\end{proof}

\section{Some particular graphs}\label{sec:graphs}

We define here the graphs that will be the main tool and describe their automorphisms.

Fix $p,q\geq 1$ and set $\Gamma_{p,q}=(V_{p,q},E_{p,q})$ where $V_{p,q}$ is a set with $p+q+1$ elements denoted 
\[V_{p,q}=\{c_0,c_1,c_2,\dots,c_p,d_1,d_2,\dots,d_q\},\] 
and where
\[E_{p,q}=\{\{(c_0,c_i\}\mid i\in\{1,2,\dots,p\}\}\cup\{\{c_0,d_1\}\}\cup\{\{d_i,d_{i+1}\}\mid i\in\{1,2,\dots,q-1\}\}.\]

The graph $\Gamma_{p,q}$ can be though as a star with $p$ spikes and a tail of length $q$. For example here are $\Gamma_{2,3}$ and $\Gamma_{3,2}$.

\begin{center}
\hfill
\begin{tikzpicture}
\node[shape=circle,draw=black] (C1) at (0,1) {$c_1$};
\node[shape=circle,draw=black] (C2) at (0,-1) {$c_2$};
\node[shape=circle,draw=black] (C0) at (1,0) {$c_0$};
\node[shape=circle,draw=black] (D1) at (2.5,0) {$d_1$};
\node[shape=circle,draw=black] (D2) at (4,0) {$d_2$};
\node[shape=circle,draw=black] (D3) at (5.5,0) {$d_3$};

\path (C1) edge (C0);
\path (C2) edge (C0);
\path (C0) edge (D1);
\path (D1) edge (D2);
\path (D2) edge (D3);
\end{tikzpicture}
\hfill
\begin{tikzpicture}
\node[shape=circle,draw=black] (C1) at (1,1) {$c_1$};
\node[shape=circle,draw=black] (C2) at (0,0) {$c_2$};
\node[shape=circle,draw=black] (C3) at (1,-1) {$c_3$};
\node[shape=circle,draw=black] (C0) at (1,0) {$c_0$};
\node[shape=circle,draw=black] (D1) at (2.5,0) {$d_1$};
\node[shape=circle,draw=black] (D2) at (4,0) {$d_2$};

\path (C1) edge (C0);
\path (C2) edge (C0);
\path (C0) edge (C3);
\path (C0) edge (D1);
\path (D1) edge (D2);
\path (C0) edge (C3);
\end{tikzpicture}
\hfill${}$
\end{center}

Finally we set $\Gamma_{p,q}^+=\Gamma_{p,q}\cup\{c_0'\}$ to be the graph $\Gamma_{p,q}$ together with a single isolated vertex added.

\begin{prop}\label{prop:aut Gamma p q}
Let $p,q\geq 1$, then $|V_{p,q}|=p+q+1$ and $|E_{p,q}|=p+q$.
If $p,q\geq 2$ then the group $\Aut_\Graphs(\Gamma_{p,q})\cong\Aut_\Graphs(\Gamma_{p,q}^+)\cong\Sigma_{p}$. 
\end{prop}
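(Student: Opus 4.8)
The plan is to prove the statement by a pure degree-counting argument, in which the hypotheses $p,q\geq 2$ serve precisely to single out the two distinguished vertices $c_0$ and $d_1$. The cardinalities are immediate: the vertices split as $c_0,c_1,\dots,c_p$ (that is $p+1$ of them) and $d_1,\dots,d_q$ (that is $q$), giving $|V_{p,q}|=p+q+1$; the edges split as the $p$ edges $\{c_0,c_i\}$, the single edge $\{c_0,d_1\}$, and the $q-1$ edges $\{d_i,d_{i+1}\}$, giving $|E_{p,q}|=p+q$.

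First I would record the degrees in $\Gamma_{p,q}$: the centre $c_0$ has degree $p+1$; each leaf $c_i$ (for $1\leq i\leq p$) and the tail endpoint $d_q$ have degree $1$; and each $d_i$ (for $1\leq i\leq q-1$) has degree $2$. Because $p\geq 2$, we have $p+1\geq 3$, so $c_0$ is the unique vertex of degree at least $3$; hence every automorphism $\sigma$ fixes $c_0$. Any automorphism permutes the neighbourhood $\{c_1,\dots,c_p,d_1\}$ of $c_0$ while preserving degrees, and here $q\geq 2$ is exactly what makes $d_1$ have degree $2$ whereas the $c_i$ have degree $1$; therefore $\sigma$ fixes $d_1$ and restricts to an arbitrary permutation of $\{c_1,\dots,c_p\}$.

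Next I would show by induction on $i$ that $\sigma$ fixes each $d_i$, the case $i=1$ being already settled. For $1\leq i<q$, the two neighbours of $d_i$ are $d_{i+1}$ together with $c_0$ (when $i=1$) or $d_{i-1}$ (when $i\geq 2$); since $\sigma$ fixes $d_i$ and this second neighbour, it must also fix $d_{i+1}$. Consequently $\sigma$ fixes $c_0$ and the whole tail $d_1,\dots,d_q$ and permutes $\{c_1,\dots,c_p\}$ freely; as every such permutation extended by the identity visibly preserves $E_{p,q}$, I conclude $\Aut_\Graphs(\Gamma_{p,q})\cong\Sigma_p$. For $\Gamma_{p,q}^+$, I would note that every vertex of $\Gamma_{p,q}$ has degree at least $1$, so the added vertex $c_0'$ is the unique isolated vertex and must be fixed; restriction then gives an automorphism of $\Gamma_{p,q}$, and conversely every automorphism of $\Gamma_{p,q}$ extends by fixing $c_0'$, whence $\Aut_\Graphs(\Gamma_{p,q}^+)\cong\Aut_\Graphs(\Gamma_{p,q})\cong\Sigma_p$.

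The argument is essentially routine, so there is no serious obstacle; the only points that need care are verifying that the inductive step along the tail invokes the correct two-element neighbourhoods, and making explicit that both hypotheses are genuinely needed --- without $q\geq 2$ the graph degenerates to the star $K_{1,p+1}$ with symmetry group $\Sigma_{p+1}$, and without $p\geq 2$ it degenerates to a path, which carries an extra reflection.
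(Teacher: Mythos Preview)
Your proof is correct and follows essentially the same approach as the paper: both arguments fix $c_0$ as the unique vertex of degree $\geq 3$ (using $p\geq 2$), then pin down the tail (the paper phrases this as ``the only tail at $c_0$ of length $\geq 2$'', you do it via the degree of $d_1$ and an explicit induction), and handle $\Gamma_{p,q}^+$ by observing $c_0'$ is the unique isolated vertex. Your version is slightly more explicit and adds the helpful remark on why both hypotheses $p,q\geq 2$ are genuinely needed.
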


\begin{proof} 
The cardinalities of the vertex and edge sets are a direct consequence of the definition. 

Assume now that $p,q\geq 2$.
Notice first that, since $c_0'$ is the unique isolated vertex in $\Gamma_{p,q}^+$, it is fixed by any automorphism and so $\Aut_\Graphs(\Gamma_{p,q}^+)\cong\Aut_\Graphs(\Gamma_{p,q})$. It remains to prove that $\Aut_\Graphs(\Gamma_{p,q})\cong \Sigma_p$.
 
As $c_0$ is the only vertex of degree 3 or more, it is fixed by any automorphism. Then the tail $c_0 - d_1 - d_2 - \cdots - d_q$, which is the only tail starting at $c_0$ of length 2 or more, is also fixed by any automorphism. Therefore, $\Aut_\Graphs(\Gamma_{p,q})$ acts faithfully on $\{c_1,c_2,\dots, c_p\}$ and this action gives an isomorphism $\Aut_\Graphs(\Gamma_{p,q})\cong \Sigma_p$. 
\end{proof}

In particular the ratio $\dfrac{|\Aut_\Graphs(\Gamma_{p,q})|}{|V_{p,q}|}$ is equal to $\dfrac{p!}{p+q+1}$ and we have the following corollary.

\begin{cor}\label{cor:main graphs}
Let $r\in\Qb\cap]0,+\infty[$. There exists a graph $\Gamma=(V,E)$ such that $r=\dfrac{|\Aut_\Graphs(\Gamma)|}{|V|}$.
\end{cor}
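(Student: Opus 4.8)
The plan is to use the family $\Gamma_{p,q}$ together with the explicit ratio recorded just above the statement, namely $\dfrac{|\Aut_\Graphs(\Gamma_{p,q})|}{|V_{p,q}|}=\dfrac{p!}{p+q+1}$ for $p,q\geq 2$, and to show that the right-hand side can be made to equal any prescribed $r$ by a suitable choice of the parameters $p$ and $q$.

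First I would write $r=a/b$ with $a,b\in\Nb$ positive. The target equation $\dfrac{p!}{p+q+1}=\dfrac{a}{b}$ rearranges to $p+q+1=\dfrac{b\,p!}{a}$, so it suffices to exhibit an integer $p\geq 2$ for which $\dfrac{b\,p!}{a}$ is an integer strictly larger than $p+2$, and then to set $q=\dfrac{b\,p!}{a}-p-1$. With these choices one has $p+q+1=\dfrac{b\,p!}{a}$, whence $\dfrac{p!}{p+q+1}=\dfrac{p!\,a}{b\,p!}=\dfrac{a}{b}=r$ by construction.

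It remains to secure the two requirements on $p$. For integrality I would observe that as soon as $p\geq a$ the factorial $p!$ is divisible by $a$, so that $\dfrac{b\,p!}{a}\in\Nb$. For the condition $q\geq 2$, i.e. $\dfrac{b\,p!}{a}\geq p+3$, I would note that the left-hand side grows factorially in $p$ while the right-hand side grows only linearly, so the inequality holds for all sufficiently large $p$. Choosing any $p\geq\max(a,2)$ large enough to meet this bound, and defining $q$ as above, yields $p,q\geq 2$, so that Proposition \ref{prop:aut Gamma p q} applies and the graph $\Gamma=\Gamma_{p,q}$ has the desired ratio.

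The argument is essentially a divisibility-and-growth check and presents no genuine obstacle; the only point requiring a little care is arranging the integrality of $\dfrac{b\,p!}{a}$ and the lower bound $q\geq 2$ simultaneously, both of which are delivered by taking $p$ large, while the hypothesis $p\geq 2$ of Proposition \ref{prop:aut Gamma p q} is guaranteed by the choice $p\geq\max(a,2)$.
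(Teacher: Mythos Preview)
Your proof is correct and follows essentially the same strategy as the paper: use the ratio $\dfrac{p!}{p+q+1}$ for $\Gamma_{p,q}$ and choose parameters so that it equals $r$. The only difference is that the paper makes the choice fully explicit---writing $r=m/n$ with $m,n\geq 3$ and taking $p=m$, $q=n(m-1)!-m-1$---whereas you invoke a ``sufficiently large $p$'' argument; both are fine, and the paper's version simply gives concrete bounds in place of your asymptotic one.
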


\begin{proof}
take $m,n\geq 3$ such that $r=\dfrac{m}{n}$. Then taking $p=m$ and $q=n\times (m-1)!-m-1$, we have $p\geq 2$, $q\geq 3(m-1)-m-1=2m-4\geq 2$ and thus 
\[\frac{|\Aut_\Graphs(\Gamma_{p,q})|}{|V_{p,q}|}=\frac{p!}{p+q+1}=\frac{m!}{n\times (m-1)!}=\frac{m}{n}=r.\]
\end{proof}

Now if we consider the graph $\Gamma_{p,q}^+$ for $p$ and $q$ well chosen to get the following two corollaries. 

\begin{cor}\label{cor:main graphs2}
Let $r\in\Qb\cap]0,+\infty[$. There exists a graph $\Gamma=(V,E)$ such that $r=\dfrac{|\Aut_\Graphs(\Gamma)|}{|V|+|E]}$.
\end{cor}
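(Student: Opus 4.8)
The plan is to realize $r$ using the graph $\Gamma_{p,q}^+$, for which Proposition \ref{prop:aut Gamma p q} already gives $|\Aut_\Graphs(\Gamma_{p,q}^+)|=|\Sigma_p|=p!$ whenever $p,q\geq 2$. The point of passing to $\Gamma_{p,q}^+$ rather than $\Gamma_{p,q}$ is that adjoining the isolated vertex $c_0'$ leaves the automorphism group and the edge set unchanged while incrementing the vertex count by one: thus $|V|=(p+q+1)+1=p+q+2$ and $|E|=p+q$, so the denominator becomes
\[ |V|+|E|=2p+2q+2=2(p+q+1), \]
which is \emph{even}. This even denominator is exactly what is needed to absorb the factor $2$ that appears below; the plain graph $\Gamma_{p,q}$ would instead give the odd value $2p+2q+1$, which is why the isolated vertex is added.

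Next I would solve $\dfrac{p!}{2(p+q+1)}=r$ for suitable $p,q$. Writing $r=m/n$ and rescaling the representation (replacing $(m,n)$ by $(km,kn)$ for $k$ large), I may assume $m\geq 4$ and $n\geq 3$. Then set $p=m$ and
\[ q=\frac{(m-1)!\,n}{2}-m-1. \]
Since $m\geq 3$ the factorial $(m-1)!$ is even, so $q$ is a genuine integer; and one checks $q\geq 2$ for $m\geq 4,\ n\geq 3$, because $(m-1)!\,n/2$ dominates $m$. With these values $p+q+1=(m-1)!\,n/2$, hence $|V|+|E|=2(p+q+1)=(m-1)!\,n$.

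Finally, since $p=m\geq 2$ and $q\geq 2$, Proposition \ref{prop:aut Gamma p q} applies and yields
\[ \frac{|\Aut_\Graphs(\Gamma_{p,q}^+)|}{|V|+|E|}=\frac{m!}{(m-1)!\,n}=\frac{m}{n}=r, \]
which is the desired conclusion. The only genuine obstacle is the bookkeeping mirroring the proof of Corollary \ref{cor:main graphs}: guaranteeing that the chosen $q$ is an integer (via the parity of $(m-1)!$) and at least $2$ (handled by rescaling $m,n$ upward). Everything else is a direct application of the already-established formulas for $|V_{p,q}|$, $|E_{p,q}|$ and $\Aut_\Graphs(\Gamma_{p,q}^+)$.
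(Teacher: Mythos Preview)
Your proof is correct and follows essentially the same strategy as the paper: both use the graph $\Gamma_{p,q}^+$, for which $|V|+|E|=2(p+q+1)$ and $|\Aut_\Graphs(\Gamma_{p,q}^+)|=p!$, and then solve $\dfrac{p!}{2(p+q+1)}=\dfrac{m}{n}$ for suitable integers $p,q\geq 2$. The only difference is cosmetic bookkeeping in how the stray factor~$2$ is absorbed: the paper sets $p=2m$ and $q=n(2m-1)!-2m-1$, so no parity argument is needed, whereas you set $p=m$ and rely on the evenness of $(m-1)!$ for $m\geq 3$ to make $q=\tfrac{(m-1)!\,n}{2}-m-1$ an integer; your version yields smaller graphs at the cost of that extra check.
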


\begin{proof}
take $m\geq 3$ and $n\geq 2$ such that $r=\dfrac{m}{n}$. Then taking $p=2m$ and $q=n\times (2m-1)!-2m-1$, we have $p\geq 6>2$, $q\geq 2(2m-1)-2m-1=2m-3>2$ and thus the wanted quotient is, 
\[\frac{|\Aut_\Graphs(\Gamma_{p,q}^+)|}{|V_{p,q}|+1+|E_{p+q}|}=\frac{p!}{2(p+q+1)}=\frac{(2m)!}{2\times n\times (2m-1)!}=\frac{2m}{2n}=\frac{m}{n}=r.\]
\end{proof}

\section{Applications to algebraic structures}\label{sec:applications}

We give here the main results of the paper which are consequences of two the previous sections.

\begin{thm}\label{thm: main evoalg}
Let $k$ be a field, and $r\in\Qb\cup ]0,\infty[$. Then there exists a
regular evolution $k$-algebra $A$ such that $r=\frac{|\Aut_{\kAlg}(A)|}{\dim_k(A)}$.
\end{thm}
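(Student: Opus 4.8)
The plan is to reduce the statement entirely to the graph-theoretic input of Section \ref{sec:graphs} via the functor $A\colon \Graphs\to\kRegEvAlg$. The crucial observation is that this functor matches the two relevant notions of size: by Theorem \ref{thm:graph to evoalg} we have $\dim_k(A(\Gamma))=|V|+|E|$ together with $\Aut_{\kAlg}(A(\Gamma))\cong\Aut_\Graphs(\Gamma)$. Hence, for any graph $\Gamma=(V,E)$,
\[\frac{|\Aut_{\kAlg}(A(\Gamma))|}{\dim_k(A(\Gamma))}=\frac{|\Aut_\Graphs(\Gamma)|}{|V|+|E|}.\]
Since the denominator on the right is precisely the "size" appearing in Corollary \ref{cor:main graphs2}, realising an arbitrary ratio $r$ for evolution algebras is nothing more than transporting the graph realised there through $A$.

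Concretely, I would first apply Corollary \ref{cor:main graphs2} to the given $r\in\Qb\cap]0,+\infty[$ to obtain a finite graph $\Gamma$ (explicitly one may take $\Gamma=\Gamma_{p,q}^+$ with $p=2m$ and $q=n\times(2m-1)!-2m-1$, writing $r=m/n$ with $m\geq 3$ and $n\geq 2$) satisfying $r=\frac{|\Aut_\Graphs(\Gamma)|}{|V|+|E|}$. I would then set $A=A(\Gamma)$. By the discussion in Section \ref{sec:algebraic structure to graph}, $A$ is automatically a \emph{regular} evolution $k$-algebra, so no extra verification of regularity is needed. Combining the displayed identity with the defining property of $\Gamma$ gives
\[\frac{|\Aut_{\kAlg}(A)|}{\dim_k(A)}=\frac{|\Aut_\Graphs(\Gamma)|}{|V|+|E|}=r,\]
which is the desired conclusion.

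There is essentially no hard step here, since all the genuine work has already been carried out: the realisability of every ratio by a graph (Corollary \ref{cor:main graphs2}) and the fact that $A$ simultaneously preserves automorphism groups and sends $|V|+|E|$ to the dimension (Theorem \ref{thm:graph to evoalg}, itself quoted from \cite{CLTV}). The only point requiring a moment's care is selecting the \emph{correct} graph result: one must use Corollary \ref{cor:main graphs2}, whose denominator is $|V|+|E|$, rather than Corollary \ref{cor:main graphs}, whose denominator is $|V|$ alone, because it is the former quantity that the functor $A$ identifies with the dimension. I expect this bookkeeping of which size convention is matched by $A$ to be the entirety of the subtlety; once it is pinned down, the proof is a one-line substitution.
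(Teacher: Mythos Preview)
Your proposal is correct and follows exactly the paper's own argument: invoke Corollary~\ref{cor:main graphs2} to obtain a graph $\Gamma$ with $r=\dfrac{|\Aut_\Graphs(\Gamma)|}{|V|+|E|}$, then apply Theorem~\ref{thm:graph to evoalg} to transport both the automorphism group and the size to $A(\Gamma)$. Your additional remarks (the explicit choice of $\Gamma_{p,q}^+$, regularity of $A(\Gamma)$, and the caution to use Corollary~\ref{cor:main graphs2} rather than Corollary~\ref{cor:main graphs}) are all on point but go slightly beyond what the paper records.
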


\begin{proof}
By Corollary \ref{cor:main graphs2}, there exists a graph $\Gamma=(V,E)$ such that $r=\dfrac{|\Aut_\Graphs(\Gamma)|}{|V|+|E]}$. Also, by Theorem \ref{thm:graph to evoalg}, there exists a regular evolution $k$-algebra $A$ such that $\dim_k(A)=|E|+|V|$ and  $\Aut_{Graphs}(\Gamma)\cong\Aut_{\kAlg}(A)$. Hence 
\[r=\frac{|\Aut_\Graphs(\Gamma)|}{|V|+|E]}=\frac{|\Aut_{\kAlg}(A)|}{\dim_k(A)}.\]
\end{proof}

\begin{rem}
Following Remark \ref{rmk:realisability evoalg}, one could have get the same result with the construction of Sriwongsa and Zou \cite{SZ} using Corollary \ref{cor:main graphs} instead of Corollary \ref{cor:main graphs}. 
\end{rem}

\begin{thm}\label{thm:main monoids}
Let $r\in\Qb\cap]0,+\infty[$. There exists a monoid $M$ such that $r=\dfrac{|\Aut_\Monoids(M)|}{|M|}$. Moreover, the monoid $M$ can be chosen to be commutative. 
\end{thm}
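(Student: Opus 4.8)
The plan is to transport the question to graphs via the functor $M\colon \InjGraphs\to\Monoids$ and then feed in the graphs of Section \ref{sec:graphs}, exactly paralleling the proof of Corollary \ref{cor:main graphs2}. By Theorem \ref{thm:graph to mon} we have $\Aut_\Monoids(M(\Gamma))\cong\Aut_\Graphs(\Gamma)$ for every graph $\Gamma$, and by Lemma \ref{lem:properties of M(Gamma)}\ref{item:properties of M(Gamma)(v)} the monoid $M(\Gamma)$ has size $|V|+|E|+2$; moreover it is commutative by Lemma \ref{lem:properties of M(Gamma)}\ref{item:properties of M(Gamma)(i)}, so the commutativity clause comes for free once we produce any such $M$. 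Thus it suffices to exhibit, for each $r$, a finite graph $\Gamma$ from our family, with parameters at least $2$, for which $\frac{|\Aut_\Graphs(\Gamma)|}{|V|+|E|+2}=r$.

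Concretely, I would work with $\Gamma_{p,q}^+$ rather than $\Gamma_{p,q}$. For $p,q\geq 2$, Proposition \ref{prop:aut Gamma p q} gives $\Aut_\Graphs(\Gamma_{p,q}^+)\cong\Sigma_p$, hence $|\Aut_\Monoids(M(\Gamma_{p,q}^+))|=p!$, while the extra isolated vertex makes the monoid size $(p+q+2)+(p+q)+2=2p+2q+4$. Given $r$, I would first write $r=m/n$ with $m\geq 3$ and $n\geq 2$ (replace a reduced fraction $a/b$ by $3a/3b$), then set $p=2m$ and $q=n(2m-1)!-2m-2$. A short computation gives $2p+2q+4=2n(2m-1)!$, so that
\[
\frac{|\Aut_\Monoids(M(\Gamma_{p,q}^+))|}{|M(\Gamma_{p,q}^+)|}=\frac{(2m)!}{2n(2m-1)!}=\frac{2m}{2n}=\frac{m}{n}=r .
\]
It then remains only to check the constraints $p,q\geq 2$: here $p=2m\geq 6$, and since $(2m-1)!\geq 2m-1$ and $n\geq 2$ one gets $q\geq 2(2m-1)-2m-2=2m-4\geq 2$.

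The one genuine subtlety — and the reason I switch from $\Gamma_{p,q}$ to $\Gamma_{p,q}^+$ — is a parity constraint. With the bare graph $\Gamma_{p,q}$ the monoid size is $2p+2q+3$, which is always odd, and matching it with $\frac{p!\,n}{m}$ would force $\frac{p!\,n}{m}$ to be odd; but with the clean choice $p=m$ one has $\frac{p!\,n}{m}=(m-1)!\,n$, which is even as soon as $m\geq 3$, so $\Gamma_{p,q}$ alone does not realise arbitrary $r$ by an elementary parameter choice. Adjoining a single isolated vertex both leaves the automorphism group equal to $\Sigma_p$ (the added vertex is fixed, as in the proof of Proposition \ref{prop:aut Gamma p q}) and shifts the size to the even value $2p+2q+4$; taking $p=2m$ then lets the factor $2m$ in $(2m)!$ cancel the $2$ in the denominator, which is precisely what makes the final ratio collapse to $m/n$. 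Everything else is the bookkeeping already carried out for graphs in Corollary \ref{cor:main graphs2}.
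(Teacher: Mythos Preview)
Your proof is correct and follows essentially the same route as the paper: transport the problem to graphs via the functor $M$, use the family from Section~\ref{sec:graphs} with $p=2m$ and $q=n(2m-1)!-2m-2$, and invoke Theorem~\ref{thm:graph to mon}, Proposition~\ref{prop:aut Gamma p q} and Lemma~\ref{lem:properties of M(Gamma)}. The one difference is that you work with $\Gamma_{p,q}^+$ while the paper uses $\Gamma_{p,q}$; your parity analysis is on target, since $|M(\Gamma_{p,q})|=(p{+}q{+}1)+(p{+}q)+2=2(p{+}q)+3$ is always odd, so the paper's line ``$|M(\Gamma_{p,q})|=2(p+q)+4$'' contains an off-by-one slip that your addition of the isolated vertex legitimately repairs.
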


\begin{proof}
take $m\geq 3$ and $n\geq 2$ such that $r=\dfrac{m}{n}$. Then taking $p=2m$ and $q=n\times (2m-1)!-2m-2$, we have $p\geq 6 > 2$, $q\geq 2(2m-1)-2m-2=2m-4\geq 2$ and thus, by Theorem \ref{thm:graph to mon} and Proposition \ref{prop:aut Gamma p q}, \[|\Aut_\Monoids(M(\Gamma_{p,q}))|=|\Aut_\Graphs(\Gamma_{p,q})|=p!=(2m)!.\] Also by Lemma \ref{lem:properties of M(Gamma)}\ref{item:properties of M(Gamma)(v)},
  \[|M(\Gamma_{p,q})|=|V_{p,q}|+|E_{p,q}|+2=2(p+q)+4=2n\times (2m-1)!.\]
Therefore,
\[\frac{|\Aut_\Monoids(M(\Gamma_{p,q}))|}{|M(\Gamma_{p,q})|}=\frac{(2m)!}{2n\times (2m-1)!}=\frac{2m}{2n}=\frac m n=r.\]
Finally, recall that by Lemma \ref{lem:properties of M(Gamma)}\ref{item:properties of M(Gamma)(i)}, $M(\Gamma_{p,q})$ is commutative.
\end{proof}

\begin{thm}\label{thm:main part}
Let $r\in\Qb\cap]0,+\infty[$. There exists a partial group $\Mc$ such that $r=\dfrac{|\Aut_\Part(\Mc)|}{|\Mc|}$.
\end{thm}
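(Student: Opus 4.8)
The plan is to mirror exactly the strategy already used for monoids in Theorem \ref{thm:main monoids}, transporting the problem to graphs via the functor $\Ec\colon\Graphs\to\Part$ and then invoking the explicit family $\Gamma_{p,q}$. By Theorem \ref{thm:graph to part} we have $\Aut_\Part(\Ec(\Gamma))\cong\Aut_\Graphs(\Gamma)$ for every graph $\Gamma$, so once we control the numerator (the order of the automorphism group) and the denominator (the cardinality $|\Ec(\Gamma)|$) in terms of $p$ and $q$, the ratio becomes a completely explicit rational function of $p$ and $q$ that we can solve.

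First I would write $r=m/n$ with $m,n$ integers. By Proposition \ref{prop:aut Gamma p q}, for $p,q\geq 2$ we have $|\Aut_\Graphs(\Gamma_{p,q})|=p!$, and by Lemma \ref{lem:properties of E(Gamma)}\ref{item:properties of E(Gamma)(iii)} the partial group satisfies $|\Ec(\Gamma_{p,q})|=1+|V_{p,q}|+2|E_{p,q}|=1+(p+q+1)+2(p+q)=3(p+q)+2$. Thus the target ratio is
\[\frac{|\Aut_\Part(\Ec(\Gamma_{p,q}))|}{|\Ec(\Gamma_{p,q})|}=\frac{p!}{3(p+q)+2}.\]
The remaining task is purely arithmetic: choose $p$ and $q$ so that this equals $m/n$. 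Setting $p=m$ forces $3(p+q)+2=n\,(m-1)!$, i.e.\ $q=\frac{n\,(m-1)!-2}{3}-m$, which is only an integer for suitable congruence classes of $n\,(m-1)!$ modulo $3$. The cleanest fix, matching the paper's own style, is to absorb a factor into $p$: take $p=3m$ (or $p=6m$ etc.) so that $p!=(3m)!$ contributes enough divisibility, and then solve $3(p+q)+2=n\cdot\frac{(3m)!}{m}$ for $q$, choosing the multiplier so that the numerator $n\cdot(3m)!/m-2-3p$ is divisible by $3$ and yields $q$ large enough that $q\geq 2$ (which I would verify by a crude lower bound, exactly as in the monoid proof where $q\geq 2m-4\geq 2$).

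The main obstacle is the $+2$ in the denominator $3(p+q)+2$, which is an awkward additive constant not present in the monoid case where $|M(\Gamma)|=2(p+q)+2$ was a clean even number. Here the factor $3$ together with the $+2$ means I cannot simply set the denominator equal to $n\cdot(\text{something})!$ on the nose; I must select the free parameter $p$ so that $n\cdot p!/m-2$ is divisible by $3$ and the resulting $q$ is a positive integer $\geq 2$. I expect this to be handled by choosing $p$ as a multiple of $m$ large enough that $p!\equiv 0\pmod 3$ guarantees $n\,p!\equiv 0\pmod 3$, so that $n\,p!-2\equiv 1\pmod 3$, and then adjusting by the explicit value of $3p$; since $3p\equiv 0\pmod 3$, I would instead pick the overall factor so the congruence closes, or equivalently replace the identity target by one where a further factor of $3$ is built into the chosen multiple. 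Once a valid $(p,q)$ is exhibited with $p,q\geq 2$, Proposition \ref{prop:aut Gamma p q}, Lemma \ref{lem:properties of E(Gamma)}\ref{item:properties of E(Gamma)(iii)}, and Theorem \ref{thm:graph to part} combine to give $r=|\Aut_\Part(\Ec(\Gamma_{p,q}))|/|\Ec(\Gamma_{p,q})|$, completing the proof.
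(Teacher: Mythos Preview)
Your overall strategy---transport to graphs via $\Ec$, use the family $\Gamma_{p,q}$, and invoke Theorem~\ref{thm:graph to part}, Proposition~\ref{prop:aut Gamma p q}, and Lemma~\ref{lem:properties of E(Gamma)}\ref{item:properties of E(Gamma)(iii)}---is exactly the paper's. But there is a genuine gap at the arithmetic step, and your proposed fix does not close it.

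With the plain graph $\Gamma_{p,q}$ you correctly compute $|\Ec(\Gamma_{p,q})|=3(p+q)+2$, and you need this to equal $n\cdot p!/m$. The trouble is that $3(p+q)+2\equiv 2\pmod 3$ for \emph{every} choice of $p,q$, whereas for any $p\geq 3$ with $m\mid p!$ (and certainly once $p$ is large enough that the $3$-adic valuation of $p!$ exceeds that of $m$) the integer $n\cdot p!/m$ is divisible by $3$. Hence $n\cdot p!/m\equiv 0\not\equiv 2\pmod 3$, and the equation $3(p+q)+2=n\cdot p!/m$ has no integer solution $q$ at all. No amount of replacing $p$ by $3m$, $6m$, etc.\ changes this: the left side is always $\equiv 2$ and the right side is always $\equiv 0$ modulo $3$ in the regime you want. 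So the obstacle you flagged is not a nuisance to be absorbed; it is fatal for $\Gamma_{p,q}$.

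The paper sidesteps this by using $\Gamma_{p,q}^+$, the graph $\Gamma_{p,q}$ together with one extra isolated vertex (which does not change the automorphism group, cf.\ Proposition~\ref{prop:aut Gamma p q}). That extra vertex bumps the count to
\[
|\Ec(\Gamma_{p,q}^+)|=1+(|V_{p,q}|+1)+2|E_{p,q}|=3(p+q)+3=3(p+q+1),
\]
a clean multiple of $3$. Then taking $p=3m$ and $q=n\,(3m-1)!-3m-1$ gives $|\Ec(\Gamma_{p,q}^+)|=3n\,(3m-1)!$ and the ratio becomes $(3m)!/\bigl(3n\,(3m-1)!\bigr)=m/n$. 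The single missing idea in your proposal is precisely this use of $\Gamma_{p,q}^+$ to convert the awkward $+2$ into $+3$.
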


\begin{proof}
take $m,n\geq 2$ such that $r=\dfrac{m}{n}$. Then taking $p=3m$ and $q=n\times(3m-1)!-3m-1$, we have $p\geq 6> 2$ and $q\geq 2(3m-1)-3m-1=3m-3> 2$ and thus , by Theorem \ref{thm:graph to part} and Proposition \ref{prop:aut Gamma p q}, \[|\Aut_\Part(\Ec(\Gamma_{p,q}^+)|=|\Aut_\Graphs(\Gamma_{p,q}^+)|=p!=(3m)!.\] Also by Lemma \ref{lem:properties of E(Gamma)}\ref{item:properties of E(Gamma)(iii)}, 
\[|\Ec(\Gamma_{p,q}^+))|=(|V_{p,q}|+1)+2|E_{p,q}|+1=(p+q+2)+2(p+q)+1=3(p+q+1)=3n\times(3m-1)!.\]
Therefore,
\[\frac{|\Aut_\Part(\Ec(\Gamma_{p,q}^+))|}{|\Ec(\Gamma_{p,q}^+)|}=\frac{(3m)!}{3\times n\times(3m-1)!}=\frac{3m}{3n}=\frac{m}{n}=r.\]
\end{proof}

\begin{thm}\label{thm:main posets}
Let $r\in\Qb\cap]0,+\infty[$. There exists a poset $P$ such that $r=\dfrac{|\Aut_\Posets(P)|}{|P|}$.
\end{thm}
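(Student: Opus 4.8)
The plan is to transport the graph-theoretic result of Corollary~\ref{cor:main graphs2} through the face-poset functor $P\colon\Graphs\to\Posets$, exactly as was done for evolution algebras in Theorem~\ref{thm: main evoalg}. The two facts that make this work are already available: Theorem~\ref{thm:graph to poset} gives $\Aut_\Posets(P(\Gamma))=\Aut_\Graphs(\Gamma)$ for every graph $\Gamma$, and Lemma~\ref{lem:properties of P(Gamma)}\ref{item:properties of P(Gamma)(iii)} gives $|P(\Gamma)|=|V|+|E|$ whenever $\Gamma=(V,E)$ is finite. The key observation is that the ``size'' of the face poset is precisely $|V|+|E|$, which is exactly the second notion of graph size appearing in Corollary~\ref{cor:main graphs2}; this is why I would invoke Corollary~\ref{cor:main graphs2} rather than Corollary~\ref{cor:main graphs}.

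Concretely, given $r\in\Qb\cap]0,+\infty[$, I would first apply Corollary~\ref{cor:main graphs2} to produce a finite graph $\Gamma=(V,E)$ with $r=|\Aut_\Graphs(\Gamma)|/(|V|+|E|)$, then set $P=P(\Gamma)$ and combine the two facts above to obtain
\[ r=\frac{|\Aut_\Graphs(\Gamma)|}{|V|+|E|}=\frac{|\Aut_\Posets(P(\Gamma))|}{|P(\Gamma)|}. \]

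If one prefers an explicit realisation, I would unwind Corollary~\ref{cor:main graphs2} and work directly with the graphs of Section~\ref{sec:graphs}: write $r=m/n$ with $m\ge 3$ and $n\ge 2$, take $p=2m$ and $q=n(2m-1)!-2m-1$ (so that $p\ge 6>2$ and $q\ge 2m-3>2$), and put $P=P(\Gamma_{p,q}^+)$. By Proposition~\ref{prop:aut Gamma p q} together with Theorem~\ref{thm:graph to poset} one gets $|\Aut_\Posets(P)|=|\Aut_\Graphs(\Gamma_{p,q}^+)|=p!=(2m)!$, while Lemma~\ref{lem:properties of P(Gamma)}\ref{item:properties of P(Gamma)(iii)}, applied to $\Gamma_{p,q}^+$ whose extra isolated vertex contributes one more singleton to the face poset, yields $|P|=(p+q+2)+(p+q)=2(p+q+1)=2n(2m-1)!$; the ratio then simplifies to $(2m)!/\bigl(2n(2m-1)!\bigr)=m/n=r$.

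There is essentially no serious obstacle here, since the heavy lifting was done in Theorem~\ref{thm:graph to poset} and Corollary~\ref{cor:main graphs2}. The only points requiring care are bookkeeping ones: confirming that the face-poset size equals $|V|+|E|$ so that the denominator matches the one already realised for graphs, and checking that the isolated vertex of $\Gamma_{p,q}^+$ is handled correctly, namely that it appears in $P(\Gamma_{p,q}^+)$ as a singleton which is simultaneously maximal and minimal, consistent with Lemma~\ref{lem:properties of P(Gamma)}\ref{item:properties of P(Gamma)(i)} and without affecting the automorphism count.
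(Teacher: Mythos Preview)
Your proposal is correct and follows essentially the same approach as the paper: the paper also chooses $m\ge 3$, $n\ge 2$ with $r=m/n$, sets $p=2m$, $q=n(2m-1)!-2m-1$, takes $P=P(\Gamma_{p,q}^+)$, and computes the ratio exactly as you do using Theorem~\ref{thm:graph to poset}, Proposition~\ref{prop:aut Gamma p q} and Lemma~\ref{lem:properties of P(Gamma)}\ref{item:properties of P(Gamma)(iii)}. Your initial presentation via Corollary~\ref{cor:main graphs2} is a clean abstraction of this, but not a different argument.
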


\begin{proof}
take $m\geq 3$ and $n\geq 2$ such that $r=\dfrac{m}{n}$. Then taking $p=2m$ and $q=n\times (2m-1)!-2m-1$, we have $p\geq 6> 2$, $q\geq 2(2m-1)-2m-1=2m-3\geq3> 2$ and thus, using  Theorem \ref{thm:graph to poset}, Proposition \ref{prop:aut Gamma p q} and Lemma \ref{lem:properties of P(Gamma)}\ref{item:properties of P(Gamma)(iii)},
\[\dfrac{|\Aut_\Posets(P(\Gamma_{p,q}^+))|}{|P(\Gamma_{p,q}^+)|}=\frac{|\Aut_\Graphs(\Gamma_{p,q}^+)|}{|P(\Gamma_{p,q}^+)|}=\frac{p!}{2(p+q)+2}=\frac{2m!}{2n\times(2m-1)!}=\frac{2m}{2n}=\frac{m}{n}=r.\]
\end{proof}


\bibliographystyle{abbrv}
\bibliography{biblio}

\end{document}